%%%%%%%%%%% FILE: asvv120070928.tex
%%%% AUTHORS: G. D. Anderson, T. Sugawa, M. K. Vamanamurthy, and M. Vuorinen
%%%% Comments by Toshi:
%%%% This file was based on asvv120070628c.tex.
%%%% I made a remark just before Lemma 2.8.
%%%%%%%%%%% LaTeX2e
\documentclass[11pt,leqno]{amsart}
\usepackage[latin1]{inputenc}
\usepackage{amssymb}
\usepackage{amsmath}
\usepackage{amsthm}
\usepackage{setspace} 
\usepackage{geometry}
\usepackage{graphicx}
\geometry{letterpaper,textwidth=5in,textheight=9in,left=1.7in}

\input{avv.sty}

\pagestyle{headings}

%\title{Special functions and hyperbolic geometry}
\title{Hypergeometric functions and hyperbolic metric}
\author[Anderson, Sugawa, Vamanamurthy, and Vuorinen]%
{G. D. Anderson, T. Sugawa, M. K. Vamanamurthy, and M. Vuorinen}
\date{}
\subjclass[2000]{Primary 30F45, Secondary 33C05, 33C75, 33E05.}
%\subjclass{Primary 30C62, secondary 33-02, 33B15, 33C05.}

\keywords{ Hypergeometric functions, complete elliptic
integrals, convexity, Poincar\'e density}
%\address{}
%\email{}

%%%%%%%%%%%% METHOD FOR HOUR AND MINUTE %%%%%%%%%%%%%
\newcounter{minutes}\setcounter{minutes}{\time}
\divide\time by 60
\newcounter{hours}\setcounter{hours}{\time}
\multiply\time by 60
\addtocounter{minutes}{-\time}
%%%%%%%%%%%%%%%%%%%%%%%%%%%%%%%%%%%%%%%%%%%%%%%%%%%%%

\begin{document}

\newcommand{\ek}{{\mathcal{K}}}
\newcommand{\ee}{{\mathcal{E}}}
\newcommand{\K}{{\mathrm{K}}}
\newcommand{\E}{{\mathrm{E}}}
\newcommand{\uhp}{\mathbb{H}}
\newcommand{\D}{\mathbb{D}}
\renewcommand{\Re}{{\operatorname{Re}\,}}
\renewcommand{\Im}{{\operatorname{Im}\,}}
\newcommand{\RS}{{\widehat{\mathbb{C}}}} % the Riemann sphere
\newcommand{\PSL}{{\operatorname{PSL}}}
\newcommand{\arctanh}{{\operatorname{arctanh}\,}}
\newcommand{\inv}{^{-1}}

%\begin{center}
%{\tiny \texttt{FILE:~\jobname .tex, 2007-07-01,
%        printed: \number\year-\number\month-\number\day,
%        \thehours.\ifnum\theminutes<10{0}\fi\theminutes}
%}
%\end{center}

\begin{abstract}
We obtain new inequalities for certain hypergeometric functions.  Using these inequalities, we deduce estimates for the hyperbolic metric and the induced distance function on a certain canonical hyperbolic plane domain.
\end{abstract}

\maketitle
\markboth{\textsc{ANDERSON, SUGAWA, VAMANAMURTHY, AND
VUORINEN}}{\textsc{SPECIAL FUNCTIONS AND HYPERBOLIC METRIC}}

%%%%%%%%% SECTION
%%%%%%%%% SECTION
\section{Introduction}\label{sect:intro}

\newcommand{\sphere}{{\widehat{\mathbb{C}}}}

The hyperbolic metric is one of the most important tools for the
study of properties of analytic functions.
A plane domain $\Omega$ is called {\it hyperbolic} if it admits
a complete Riemannian metric of constant curvature $-4.$
The metric is called the {\it hyperbolic metric} of $\Omega$ and denoted by
$\rho_\Omega(z).$
We denote by $d_\Omega(z,w)$ the hyperbolic distance on $\Omega,$
which is induced by $\rho_\Omega.$
It is well known that a domain in the complex plane $\C$ is 
hyperbolic if and only if the boundary contains at least two points.
For instance, the unit disk $\D=\{z\in\C:|z|<1\}$ has
$\rho_\D(z)=1/(1-|z|^2)$ and $d_\D(z,w)=\arctanh (|z-w|/|1-\bar w z|).$

We recall the {\it principle of hyperbolic metric}, 
which is very useful in the study of conformal invariants \cite{Ah}.
Let $D$ and $\Omega$ be hyperbolic domains.
For an analytic map $f:D\to\Omega,$
this principle implies that the inequality
$\rho_\Omega(f(z))|f'(z)|\le\rho_D(z)$ holds, implying that
$d_\Omega(f(z),f(w))\le d_D(z,w)$
for $z,w\in D.$
Thus, a lower estimate for $d_\Omega(w_0,w)$ in terms of $|w|$
(for a fixed $w_0$) will lead to a growth estimate for an analytic
function $f: D\to \Omega$ with $f(z_0)=w_0$ for a fixed $z_0\in D.$
Similarly, a lower estimate for $\rho_\Omega$ yields a distortion
theorem for an analytic function on $D$ taking values in $\Omega.$

Since the twice-punctured plane $\C\setminus\{a,b\}$ is a
maximal hyperbolic domain, it is of particular importance.
We write $\lambda_{a,b}=\rho_{\C\setminus\{a,b\}}$ and
$d_{a,b}=d_{\C\setminus\{a,b\}}$ for short.
Noting the obvious relations
$$
\lambda_{0,1}(z)=|b-a|\lambda_{a,b}((b-a)z+a),
\quad
%\quad\text{and}\quad
d_{0,1}(z,w)=d_{a,b}((b-a)z+a,(b-a)w+a),
$$
we may restrict our attention to the case when $a=0$ and $b=1.$
Precise information about $\lambda_{0,1}(z)$ and $d_{0,1}(z,w)$
leads to sharp forms of classical results in function theory
such as the Landau, Schottky, and Picard theorems
(cf.~\cite{Ah}, \cite{Hempel79}, \cite{Hempel80})
as well as various useful estimates for the hyperbolic metric of
a general plane domain (cf.~\cite{BP78}, \cite{SV}).

%In particular, when $D=\D$ and $\Omega=\C\setminus\{0,1\},$
%we have $d_{0,1}(f(0),f(z))\le \arctanh(|z|)$ for $z\in\D.$

Since the inequalities
$$
\lambda_{0,1}(-|z|)\le \lambda_{0,1}(z)
\quad\text{and}\quad
d_{0,1}(-|z|, -|w|)\le d_{0,1}(z,w)
$$
hold for $z,w\in\C\setminus\{0,1\}$  (see \cite{LVV}),
lower estimates for $\lambda_{0,1}$ and $d_{0,1}$ reduce to
the analysis of these quantities on the negative real axis.

It is known (cf.~\cite{SV}) that $\lambda_{0,1}(-x)$ and $d_{0,1}(-x,-y)$
can be expressed in terms of complete elliptic integrals of the first kind
for $x,y>0$:
\begin{align*}
%\label{eq:lambda}
\lambda_{0,1}(-x)&=\frac{\pi}{8x\ek(r)\ek(r')}, \quad
\text{and} \\
%\label{eq:dist}
d_{0,1}(-x,-y)&=|\Phi(x)-\Phi(y)|,
\end{align*}
where
\begin{align}
\label{eq:Phi}
\Phi(x)&=\frac12\log\frac{\ek(r)}{\ek(r')}, \\
\notag%\label{eq:rx}
r=\sqrt{\frac{x}{1+x}}, \quad
r'&=\sqrt{1-r^2}=\sqrt{\frac{1}{1+x}},
\end{align}
and
\begin{equation*}%\label{eq:ek}
\ek(r)=\int_0^1\frac{dt}{\sqrt{(1-t^2)(1-r^2t^2)}}
\end{equation*}
is the complete elliptic integral of the first kind.  
Note that the function $\Phi$ satisfies the relation $\Phi(1/x)=-\Phi(x)$
for $x>0.$
In particular, $\Phi(1)=0$ and hence $\Phi(x)=d_{0,1}(-x,-1)$ for $x\ge1.$
Note also that $\Phi$ can be expressed by
$2\Phi(x)=-\log(2\mu(r)/\pi),$
where $\mu(r)$ denotes the well-known modulus of the Gr\"otzsch ring
$\D\setminus[0,r]$ given by $\mu (r) =(\pi /2)\mathcal K(r')/\mathcal K(r)$,
for $r\in (0,1)$, and where $\D$ is the unit disk $\{z: |z|<1\}$ 
in the complex plane $\C$ (see \cite{LV} or \cite{AVV1} for details).

Since
\begin{equation}\label{eq:K}
\ek(r)=\frac\pi2 F\left(\frac{1}{2}, \frac{1}{2};1;r^2\right)
\end{equation}
in terms of the Gaussian hypergeometric function with specific parameters
(cf.~\cite[(3.13)]{AVV1}),
inequalities for hypergeometric functions
will lead to estimates for the hyperbolic metric.

In the present paper, we give several inequalities for hypergeometric
functions with restricted parameters.
By using these inequalities, we deduce estimates for the hyperbolic metric and
hyperbolic distance of the twice-punctured plane $\C\setminus\{0,1\}.$
In particular, we prove some of the conjectures proposed in \cite{SV}.
Inequalities for hypergeometric functions have
applications also in the study of the hyperbolic metric with conical
singularities as in [ASVV].

%%%%%%%%% SECTION
%%%%%%%%% SECTION
\section{Preliminaries}
In this section, we recall some definitions and properties of special functions
\cite{R}, and state two results that are particularly useful 
in proving monotonicity of a quotient of two functions \cite{AVV1},\cite{HVV}.

Given complex numbers
$a$, $b$, and $c$ with $c\neq 0,-1,-2, \dots, $
the \emph{Gaussian hypergeometric function} is defined as

\begin{equation}\label{eq:kolmas}
F(a,b;c;z)\!= \!{}_2 F_1(a,b;c;z)\! \equiv\!
\sum_{n=0}^{\infty} \frac{(a,n)(b,n)}{(c,n)} \frac{z^n}{n!},
\quad |z|<1,
\end{equation}

\noindent 
and then is continued analytically to the slit plane $\C\setminus [1,\infty)$.
Here $(a,n)$
is the \emph{shifted factorial function}, namely, $(a,0)=1$ and
\begin{equation*}
(a,n)\equiv a(a+1)(a+2) \cdots (a+n-1)
\end{equation*}
for $n=1,2,3,\ldots $.  For our applications to hypergeometric functions, 
in the sequel we will need only real parameters $a,b,c$
and real argument $z=x$.

It is well known that the hypergeometric function $v=F(a,b;c;x)$
satisfies the hypergeometric differential equation
\begin{equation}\label{eq:DE}
x(1-x)v''+[c-(a+b+1)x]v'-ab v=0.
\end{equation}

We recall the derivative formula
\begin{equation}\label{eq:diff}
F'(a,b;c;x)=\frac{d}{dx}F(a,b;c;x)=\frac{ab}{c}F(a+1,b+1;c+1;x).
%z(1-z)\frac{d}{dz}F(a,b;c;z)=(c-a)F(a-1,b;c;z)+(a-c+bz)F(a,b;c;z).
\end{equation}

The behavior of the hypergeometric function near $x=1$ is given by

\begin{equation}\label{eq:viides}
\begin{cases}
F(a,b;c;1) = \dfrac{\Gamma(c) \Gamma(c-a-b)}{\Gamma(c-a) \Gamma(c-b)},~
c>a+b,\\
F(a,b;a+b;1-) = \dfrac1{B(a,b)}\Big[\log\dfrac{1}{1-x}+R(a,b)\Big](1+O(1-x)),\\
%\qquad\qquad\qquad (\text{as}~ x\to1-), \\
F(a,b;c;x) = (1-x)^{c -a -b} F(c-a,c-b;c;x),~ c<a+b.
\end{cases}
\end{equation}
Here  $B(a,b)$ denotes the beta function, namely,
$$
B(a,b)=\frac{\Gamma(a)\Gamma(b)}{\Gamma(a+b)},
$$
and $R(a,b)$ is the function defined by
$$
R(a,b) = -2 \gamma - \Psi(a) - \Psi(b),
$$
where $\Psi(x)=\Gamma'(x)/\Gamma(x)$ is the digamma function and
$\gamma$ is the Euler-Mascheroni constant, known to equal $-\Psi(1).$
In particular, $B(1/2,1/2) = \pi$ and $R(1/2,1/2) = \log 16$.
The asymptotic formula  in (\ref{eq:viides}) for the \emph{zero-balanced} case $a+b=c$
%is due to Gauss.
is due to Ramanujan.
% (see \cite{As}).
A general formula appears in \cite[15.3.10]{AS}.
Some explicit estimates for the $O(1-x)$ term in the zero-balanced case
are given in \cite[Theorem 1.52]{AVV1}.

\bigskip

The following lemmas will be used in the next section.

\begin{lem}\label{lem:diff}
For $a,b$ with $a+b\ne 0,-1,-2,\dots,$
$$
(1-x)\frac{d}{dx}F(a,b;a+b;x)=\frac{ab}{a+b} F(a,b;a+b+1;x).
$$
\end{lem}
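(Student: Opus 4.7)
The plan is to reduce the lemma to a special case of Euler's transformation and then establish that identity by a short comparison of power-series coefficients.

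First, I would apply the derivative formula \eqref{eq:diff} directly to rewrite the left-hand side as
\[
(1-x)\frac{d}{dx}F(a,b;a+b;x)=\frac{ab}{a+b}\,(1-x)F(a+1,b+1;a+b+1;x).
\]
Thus the lemma is equivalent to the contiguous identity
\[
(1-x)F(a+1,b+1;a+b+1;x)=F(a,b;a+b+1;x),
\]
which is precisely Euler's transformation $F(a,b;c;x)=(1-x)^{c-a-b}F(c-a,c-b;c;x)$ with the choice $c=a+b+1$. (The third line of \eqref{eq:viides} states this transformation only under $c<a+b$, so I will prove the required identity directly rather than invoke it.)

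To verify the displayed contiguous identity, I would expand both sides as power series in $x$ and compare coefficients of $x^n$. The constant terms obviously agree. For $n\ge 1$, the coefficient of $x^n$ on the left is
\[
\frac{(a+1,n)(b+1,n)}{(a+b+1,n)\,n!}-\frac{(a+1,n-1)(b+1,n-1)}{(a+b+1,n-1)\,(n-1)!},
\]
and factoring out the second term reduces the bracket to
\[
\frac{(a+n)(b+n)}{n(a+b+n)}-1=\frac{ab}{n(a+b+n)},
\]
after which the routine identities $(a+1,n-1)=(a,n)/a$, $(b+1,n-1)=(b,n)/b$, and $(a+b+1,n-1)(a+b+n)=(a+b+1,n)$ collapse the expression to $(a,n)(b,n)/\bigl[(a+b+1,n)\,n!\bigr]$, matching the coefficient on the right.

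The only step that needs any care is the algebraic simplification in the bracket above; everything else is automatic from the definition of the shifted factorial. No convergence issues arise, since the identity is an identity of formal power series for $|x|<1$, and both sides extend by analytic continuation to $\C\setminus[1,\infty)$, covering the parameter restriction $a+b\ne 0,-1,-2,\dots$ which is exactly what is needed for $(a+b,n)$ and $(a+b+1,n)$ to be nonzero.
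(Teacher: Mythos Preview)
Your proof is correct and follows essentially the same route as the paper: both reduce via \eqref{eq:diff} to the Euler identity $(1-x)F(a+1,b+1;a+b+1;x)=F(a,b;a+b+1;x)$, which the paper simply reads off from the third case of \eqref{eq:viides}. Your worry about the hypothesis $c<a+b$ was unnecessary, since applying that formula with $(a+1,b+1,a+b+1)$ in the roles of $(a,b,c)$ makes the condition read $a+b+1<a+b+2$, which is automatic; your direct power-series verification is fine but not needed.
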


\proof
In view of \eqref{eq:diff}, it is enough to show the identity
$F(a+1,b+1;a+b+1;x)=(1-x)^{-1}F(a,b;a+b+1;x),$ which follows
from the third case of \eqref{eq:viides}.
\qed

\bigskip

A sequence $\{a_n\}_{n=0}^\infty$ of real numbers
is called {\it totally monotone} (or {\it completely monotone}) if
$\Delta^k a_n\ge0$ for all $k,n=0,1,2,\dots.$
Here, $\Delta^k a_n$ is defined inductively in $k$ by
$\Delta^0 a_n=a_n$ and $\Delta^{k+1}a_n=\Delta^ka_n-\Delta^ka_{n+1}.$

\begin{rem}\label{rem:Dirac}
For a totally monotone sequence $\{a_n\},$
$\Delta^ka_n>0$ for $k,n=0,1,2,\dots$
unless $a_1=a_2=a_3=\dots.$

Indeed, Hausdorff's theorem (cf.~\cite{Wall:fraction}) 
tells us that $\{a_n\}$ is
totally monotone precisely when there exists a positive Borel measure
$\nu$ on $I=[0,1]$ such that
$$
a_n=\int_I x^n d\nu(x), \quad (n=0,1,2,\dots).
$$
Since
$$
\Delta^ka_n=\int_I x^n(1-x)^k d\nu(x),
$$
we have the strict inequality $\Delta^ka_n>0$
unless $\nu$ is a linear combination of the Dirac measures
$\delta_0$ at $x=0$ and $\delta_1$ at $x=1.$
When $\nu=a\delta_0+b\delta_1$ with $a,b\ge0,$ we have
$a_0=a+b$ and $a_1=a_2=\dots=b.$
\end{rem}

K\"ustner \cite{Kus02} studied hypergeometric functions from the aspect
of totally monotone sequences.
We need the following result for later use.

\begin{lem}[$\text{K\"ustner \cite[Theorem 1.5]{Kus02}}$]
\label{lem:Kustner}
Let $a,b,c$ be real numbers with $-1\le a\le c$ and $0<b\le c.$
The coefficients of the power series expansion
of the function $F(a+1,b+1;c+1;x)/F(a,b;c;x)$ about $x=0$
form a totally monotone sequence.
\end{lem}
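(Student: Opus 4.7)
The plan is to reduce total monotonicity to a Stieltjes integral representation via Hausdorff's theorem (as recalled in Remark~\ref{rem:Dirac}): the Taylor coefficients $\{q_n\}$ of $\phi(x):=F(a+1,b+1;c+1;x)/F(a,b;c;x)=\sum q_n x^n$ are totally monotone if and only if
$$
\phi(x)=\int_0^1 \frac{d\nu(t)}{1-tx}
$$
for some positive Borel measure $\nu$ on $[0,1]$. The entire task therefore becomes producing such a $\nu$.

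I would first rewrite the ratio via the derivative formula~\eqref{eq:diff} as the logarithmic derivative $\phi(x)=(c/(ab))(d/dx)\log F(a,b;c;x)$, then dispatch the degenerate cases: $b=c$ (and, by symmetry in the first two parameters, $a=c$) gives $F(a,c;c;x)=(1-x)^{-a}$ and hence $\phi(x)=(1-x)^{-1}$, whose constant coefficient sequence is the moment sequence of $\delta_1$; the case $a=0$ is similarly direct since then $\phi\equiv 1$. For generic parameters I would combine Euler's integral
$$
F(a,b;c;x)=\frac{1}{B(b,c-b)}\int_0^1 t^{b-1}(1-t)^{c-b-1}(1-tx)^{-a}\,dt
$$
(applied after the Euler transformation $F(a,b;c;x)=(1-x)^{c-a-b}F(c-a,c-b;c;x)$ from~\eqref{eq:viides} when $a<0$, to secure positivity of the exponent) with differentiation under the integral sign to write both the numerator and denominator of $\phi$ as Stieltjes transforms of positive measures. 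Since the quotient of two Stieltjes transforms is not generally a Stieltjes transform, this alone does not suffice.

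To overcome this I would iterate the Gauss contiguous relations to expand $\phi(x)$ as a classical S-fraction
$$
\phi(x)=\cfrac{1}{1-\cfrac{\alpha_1 x}{1-\cfrac{\alpha_2 x}{1-\cdots}}}
$$
whose coefficients $\alpha_k=\alpha_k(a,b,c)$ are explicit rational expressions. By Stieltjes--Wall-type theorems for S-fractions with nonnegative coefficients, such an expansion automatically delivers the desired Hausdorff representation. The principal anticipated obstacle is verifying that the hypotheses $-1\le a\le c$ and $0<b\le c$ actually force $\alpha_k\ge 0$ for every $k$: the lower bound $a\ge -1$ appears sharp (smaller $a$ do produce negative $\alpha_k$), so the first several continued-fraction coefficients require careful bookkeeping at the boundary $a=-1$, whereas positivity for large $k$ reduces to a routine asymptotic check against the leading-order rational expression.
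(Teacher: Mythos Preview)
The paper does not supply its own proof of this lemma: it is quoted as K\"ustner's result \cite[Theorem~1.5]{Kus02} and used as a black box, so there is no in-paper argument to compare your proposal against.

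On its own merits, your plan is sound and is in fact the strategy K\"ustner uses: expand the ratio as a continued fraction via the Gauss contiguous relations and then invoke Wall's characterization of Hausdorff moment sequences (see \cite{Wall:fraction}, which the paper already cites for Hausdorff's theorem). Two points deserve tightening. First, ``S-fraction with nonnegative coefficients $\Rightarrow$ Hausdorff representation'' is not quite right as stated: nonnegative S-fraction coefficients yield only a Stieltjes representation on $[0,\infty)$; to force the measure onto $[0,1]$ you need the finer $g$-fraction structure (chain-sequence coefficients lying in $[0,1]$), and this is exactly where the parameter constraints $-1\le a\le c$ and $0<b\le c$ do their work. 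Second, the classical Gauss continued fraction shifts \emph{one} upper parameter at a time, whereas your target ratio $F(a+1,b+1;c+1;x)/F(a,b;c;x)$ shifts both; you therefore need either an iterated contiguous relation or one of the known direct expansions for this particular quotient---both exist, but you should say which one you use and record the explicit $\alpha_k$. Your treatment of the degenerate cases $a=0$ and $b=c$ (or $a=c$), and your identification of $a=-1$ as the boundary at which the positivity check becomes delicate, are correct.
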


\bigskip

The following general lemma will be a useful tool for 
proving properties of hypergeometric functions in the next section.
We remark that this is a special case of a more general result
\cite[Lemma 2.1]{PV} (see also \cite[Theorem 4.3]{HVV}).
\bigskip

\begin{lem}\label{lem:new}
Let $f(x)=\sum_{n=0}^{\infty } a_nx^n$ be a real power series convergent 
on $(-1,1)$, and let the sequence $\{a_n\}$ be non-constant and monotone.  
Let $g(x)=(1-x)f(x)=\sum_{n=0}^{\infty} b_nx^n$.
\begin{enumerate}
\item[(1)] If the sequence $\{a_n\}$ is non-decreasing, 
then $b_n \ge 0$, for $n=1, 2, \ldots,$ with strict inequality 
for at least one $n$.  In particular, $g$ is strictly increasing on $[0,1)$.
\item[(2)] If the sequence $\{a_n\}$ is non-increasing, 
then $b_n\le 0$, for $n=1,2,\ldots$, with strict inequality 
for at least one $n$.  In particular, $g$ is strictly decreasing on $[0,1)$.
\end{enumerate}
\end{lem}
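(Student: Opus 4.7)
The plan is to observe that multiplication by $(1-x)$ produces a telescoping effect on the coefficients, reducing both assertions to the definition of monotonicity together with the non-constancy hypothesis.

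First I would compute the coefficients $b_n$ explicitly. From
\[
g(x)=(1-x)\sum_{n=0}^\infty a_nx^n=\sum_{n=0}^\infty a_nx^n-\sum_{n=0}^\infty a_nx^{n+1},
\]
reading off coefficients gives $b_0=a_0$ and
\[
b_n=a_n-a_{n-1}\quad\text{for } n\ge 1.
\]
With this formula in hand, statement (1) is immediate: if $\{a_n\}$ is non-decreasing then each difference $a_n-a_{n-1}$ is non-negative, so $b_n\ge 0$ for $n\ge 1$. Since the sequence is by assumption non-constant, there must exist at least one index $n\ge 1$ with $a_n>a_{n-1}$, giving the required strict inequality $b_n>0$. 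Statement (2) follows by applying the same argument to $-f$, or equivalently by noting that a non-increasing $\{a_n\}$ forces $b_n=a_n-a_{n-1}\le 0$ with strict inequality at some index.

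For the ``In particular'' clauses I would argue directly rather than via derivatives (since we only know convergence on $(-1,1)$ and want strict monotonicity on the closed endpoint $0$). Fix $0\le x<y<1$. Then in case (1),
\[
g(y)-g(x)=\sum_{n=1}^\infty b_n(y^n-x^n),
\]
and every term in this series is non-negative because $b_n\ge 0$ and $y^n>x^n$ for $n\ge 1$. The term corresponding to the index $n_0$ at which $b_{n_0}>0$ is strictly positive, so $g(y)>g(x)$, proving strict monotonicity of $g$ on $[0,1)$. Case (2) is entirely analogous, with all inequalities reversed.

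There is no substantial obstacle here: the only mild subtlety is that ``strictly increasing on $[0,1)$'' includes the endpoint $x=0$, so I avoid the tempting but insufficient route of showing $g'(x)>0$ on $(0,1)$ and instead use the direct difference estimate above, which handles $x=0$ uniformly with the other points.
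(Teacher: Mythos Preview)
Your proposal is correct and takes essentially the same approach as the paper: compute $b_0=a_0$ and $b_n=a_n-a_{n-1}$ for $n\ge1$, then read off the sign conclusions from monotonicity and non-constancy. The paper's proof is a one-line version of exactly this; your added direct difference estimate for the strict monotonicity clause is a reasonable elaboration of what the paper leaves implicit.
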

\begin{proof}
Clearly, $b_0=a_0$ and $b_n=a_n-a_{n-1}$, for $n=1,2,3,\ldots,$ 
so that the assertions follow immediately.
\end{proof}
\vspace{.1in}

\begin{rem}\label{rem:new}
In Lemma \ref{lem:new}, if the sequence $\{a_n\}$ is constant, 
then clearly $g(x)$ is constant and equals $a_0$ for all $x$.
\end{rem}
\bigskip

%%%%%%%%%%%%%%%%%%%%%%%
\section{Some properties of hypergeometric functions}
%%%%%%%%%%%%%%%%%%%%%%%%

In the present section, we investigate properties of some combinations
of hypergeometric functions.
Some of these will be applied to hyperbolic metric in the next section.

\begin{lem}\label{lem:Vaman}
For positive numbers $a,b,c$ and $x \in (-1,1)$, we let 
$f(x) = (1-x)F(a,b;c;x)$.
\begin{enumerate}
\item[(1)] If $ab \ge c$, and $a+b \ge c+1$, with at least one of these
inequalities being strict, then $f'(x)>0$ on $(0,1)$, so that $f$ is strictly increasing.
\item[(2)] If $ab \le c$, and $a+b \le c+1$, with at least one of these
inequalities being strict, then $f'(x)<0$ on $(0,1)$, so that $f$ is strictly decreasing.
\item[(3)] If both of the inequalities become equalities, then $f$ is the constant function $1$.
\end{enumerate}
\end{lem}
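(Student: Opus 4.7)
The plan is to expand the hypergeometric function as a power series and apply Lemma~\ref{lem:new}, after analyzing the sign behaviour of the coefficient differences. Write
$$
F(a,b;c;x)=\sum_{n=0}^\infty a_n x^n,\qquad
a_n=\frac{(a,n)(b,n)}{(c,n)\,n!}>0,
$$
so that $f(x)=(1-x)F(a,b;c;x)$ is precisely the function $g$ in Lemma~\ref{lem:new} (with $F(a,b;c;x)$ playing the role of the $f$ there). The whole argument reduces to detecting monotonicity of $\{a_n\}$.

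The key computation is the consecutive ratio
$$
\frac{a_{n+1}}{a_n}=\frac{(a+n)(b+n)}{(n+1)(c+n)},
$$
so that $a_{n+1}-a_n$ has the same sign as
$$
(a+n)(b+n)-(n+1)(c+n)=(ab-c)+\bigl[(a+b)-(c+1)\bigr]n,
$$
which is linear in $n$.

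For case (1), the hypotheses $ab\ge c$ and $a+b\ge c+1$ make this linear function non-negative for every $n\ge 0$, hence $\{a_n\}$ is non-decreasing. To see that $\{a_n\}$ is non-constant, note that if $ab>c$ then the $n=0$ value is strictly positive (so $a_1>a_0$), while if $ab=c$ but $a+b>c+1$ then the $n=1$ value is strictly positive (so $a_2>a_1$). Therefore Lemma~\ref{lem:new}(1) applies: writing $f(x)=\sum_{n\ge 0} b_n x^n$ with $b_0=a_0$ and $b_n=a_n-a_{n-1}$ for $n\ge 1$, we have $b_n\ge 0$ for $n\ge 1$, with $b_k>0$ for some $k\ge 1$. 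Then
$$
f'(x)=\sum_{n\ge 1} n\,b_n\,x^{n-1}
$$
is a series with non-negative coefficients containing at least one strictly positive term of positive index, so $f'(x)>0$ for every $x\in(0,1)$. Case (2) is entirely symmetric via Lemma~\ref{lem:new}(2).

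For case (3), $ab=c$ and $a+b=c+1$ force $a$ and $b$ to be the roots of $t^2-(c+1)t+c=(t-1)(t-c)$, so $\{a,b\}=\{1,c\}$. Then all $a_n=1$ and $F(a,b;c;x)=1/(1-x)$, giving $f\equiv 1$. (This matches the constant-sequence case noted in Remark~\ref{rem:new}.) The only place requiring minor care is the passage from strict monotonicity of $f$ on $[0,1)$ to the pointwise statement $f'>0$ on $(0,1)$; this is immediate from the power-series form above and is the only step beyond a direct citation of Lemma~\ref{lem:new}.
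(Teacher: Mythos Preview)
Your proof is correct and follows essentially the same route as the paper: compute the ratio $a_{n+1}/a_n=(a+n)(b+n)/[(c+n)(n+1)]$, reduce the sign of $a_{n+1}-a_n$ to that of the linear expression $(ab-c)+[(a+b)-(c+1)]n$, and invoke Lemma~\ref{lem:new} and Remark~\ref{rem:new}. Your write-up is slightly more explicit than the paper's in checking the non-constant condition, in deriving the pointwise inequality $f'(x)>0$ from the coefficient signs (Lemma~\ref{lem:new} as stated only asserts strict monotonicity), and in resolving case~(3) by identifying $\{a,b\}=\{1,c\}$, but none of this constitutes a different approach.
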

\begin{proof}
$F(a,b;c;x) = \sum_{n=0}^{\infty} T_nx^n$, where $T_n= (a,n)(b,n)/[(c,n)n!]$.  Hence, $T_{n+1}/T_n = (a+n)(b+n)/[(c+n)(n+1)]$, for $n=0,1,2,\ldots$, which is non-decreasing in case (1), non-increasing in case (2), and constant in case (3).  Hence the assertions follow from Lemma \ref{lem:new} and Remark \ref{rem:new}.
\end{proof}
\bigskip
\begin{lem}\label{lem:concave}
Let $a,b,c$ be positive numbers with $\max\{a,b\}<c$
and set $v(x)=F(a,b;c;x)$ for $x\in(0,1).$
Then the function
$$
f(x)=x(1-x)\frac{v'(x)}{v(x)}
$$
is positive and has negative Maclaurin coefficients except for the linear term.
In particular, $f(x)$ is strictly concave on $(0,1)$.
\end{lem}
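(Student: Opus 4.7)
My plan is to convert $f$ into a scalar multiple of $x(1-x)$ times the ratio studied by K\"ustner, then read off the sign pattern of the Maclaurin coefficients using total monotonicity; the delicate point will be upgrading nonpositivity to strict negativity.

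Using the derivative formula \eqref{eq:diff}, $v'(x)=(ab/c)F(a+1,b+1;c+1;x)$, so
$$
f(x)=\frac{ab}{c}\,x(1-x)\,g(x),\qquad g(x):=\frac{F(a+1,b+1;c+1;x)}{F(a,b;c;x)}.
$$
Positivity of $f$ on $(0,1)$ is immediate since both hypergeometric series have positive coefficients. Because $0<a,b<c$, Lemma~\ref{lem:Kustner} applies, so $g(x)=\sum_{n\ge 0}g_n x^n$ has a totally monotone coefficient sequence with $g_0=1$. Expanding $x(1-x)g(x)$ would give
$$
f(x)=\frac{ab}{c}\,x-\frac{ab}{c}\sum_{n=2}^\infty\Delta g_{n-2}\,x^n,\qquad \Delta g_k:=g_k-g_{k+1}\ge 0,
$$
so the linear coefficient is $ab/c>0$ and all higher coefficients are nonpositive by total monotonicity.

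The main obstacle is promoting these to strict inequalities. By Remark~\ref{rem:Dirac}, $\Delta g_k>0$ strictly for every $k$ unless $g_1=g_2=\cdots$, equivalently $g(x)=(1-\alpha x)/(1-x)$ for some $\alpha\in[0,1]$. In that case the identity $(c/ab)v'=gv$ would reduce to the first-order linear ODE $c(1-x)v'=ab(1-\alpha x)v$, whose unique solution with $v(0)=1$ is
$$
v(x)=\exp\!\Bigl(\tfrac{ab\alpha}{c}\,x\Bigr)(1-x)^{-ab(1-\alpha)/c}.
$$
I would plug this explicit $v$ into the hypergeometric ODE \eqref{eq:DE}, multiply through by $(1-x)$ to clear denominators, and collect the result as a polynomial identity in $x$. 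A direct computation shows that the coefficient of $x^3$ is $(ab\alpha/c)^2$, forcing $\alpha=0$; but then $g(x)=1/(1-x)$ yields $g_1=1$, while a direct expansion of the quotient gives $g_1=1-(c-a)(c-b)/(c(c+1))<1$ since $c>\max\{a,b\}$. This contradiction rules out the boundary case, so $\Delta g_k>0$ strictly for every $k\ge 0$ and every Maclaurin coefficient of $f$ in degree $\ge 2$ is strictly negative. Strict concavity of $f$ on $(0,1)$ is then automatic, since the series $f''(x)$ consists entirely of strictly negative terms on $[0,1)$.
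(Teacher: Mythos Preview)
Your argument is correct and follows essentially the same route as the paper's proof: both use K\"ustner's lemma together with Remark~\ref{rem:Dirac} to reduce strictness to ruling out the degenerate case $v'/v=\text{const}+\text{const}/(1-x)$, and both then substitute the resulting explicit $v$ into the hypergeometric ODE~\eqref{eq:DE} to reach a contradiction. The only cosmetic difference is in how the contradiction is extracted from the polynomial identity---the paper equates all coefficients to obtain $(c-a)(c-b)=0$ directly, while you isolate the $x^3$ coefficient and then compare with the explicit value of $g_1$.
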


\proof
First, the positivity of $f(x)$ is obvious from (\ref{eq:diff}). Next, we expand $v'(x)/v(x)$ as a power series
$a_0+a_1x+a_2x^2+\dots$ in $|x|<1.$
Lemma \ref{lem:Kustner}, together with \eqref{eq:diff}, implies
that $\{a_n\}$ is a totally monotone sequence and, in particular,
that $0\le a_{n}\le a_{n-1}$ for $n=1,2,3,\dots.$
If one of the inequalities were not strict for some $n,$
by Remark \ref{rem:Dirac} we would have
$v'(x)/v(x)= \alpha/(1-x)+\beta$
for some constants $\alpha, \beta\ge0.$
Then $v(x)=e^{\beta x}(1-x)^{-\alpha}.$
We substitute $v'=(\alpha/(1-x)+\beta)v$ and
$v''=[(\alpha/(1-x)+\beta)^2+\alpha/(1-x)^2]v$ into
\eqref{eq:DE} to get the relation
$$
x[(\alpha+\beta(1-x))^2+\alpha]+(\alpha+\beta(x-1))[c-(a+b+1)x]
-ab(1-x)=0.
$$
Equating the coefficients to zero, we get $\beta = 0$, 
$\alpha = ab/c$, and $ab+c^2 = (a+b)c$, 
so that $(c-a)(c-b)=0$, a contradiction.

Therefore, we have $0<a_n<a_{n-1}$ for $n=1,2,3,\dots.$
Since
$$
f(x)=a_0x+\sum_{n=1}^\infty(a_n-a_{n-1})x^{n+1},
$$
the first assertion follows.
We also have $f''(x)<0,$ from which the strict concavity follows.
\qed
\vspace{.1in}

\begin{rem}\label{rem:simpler}
When $\max\{a,b\}=c,$ we have an even simpler conclusion.
Assume, for instance, that $0<a\le b=c.$
Then $v(x)=(1-x)^{-a}$ and thus $f(x)=ax.$
\end{rem}
\vspace{.1in}

\begin{cor}\label{cor:concave}
Let $a,b,c$ be positive numbers with $\max\{a,b\}\le c$
and set $v(x)=F(a,b;c;x)$ and
$$
N(x)=x(1-x)\left[\frac{v'(x)}{v(x)}+\frac{v'(1-x)}{v(1-x)}\right]
$$
for $x\in(0,1).$
When $\max\{a,b\}<c,$ the function $N(x)$
is positive, symmetric about the point $x=1/2,$ strictly concave on $(0,1),$
strictly increasing on $(0,1/2]$, and
strictly decreasing on $[1/2,1).$
When $\max\{a,b\}=c,$ $N(x)$ is a positive constant.
\end{cor}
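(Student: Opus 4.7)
The plan is to recognize that the corollary is essentially a formal consequence of Lemma~\ref{lem:concave} once one observes that $N$ decomposes as $N(x)=f(x)+f(1-x)$, where $f(x)=x(1-x)v'(x)/v(x)$ is the function studied in that lemma. Indeed, replacing $x$ by $1-x$ in the definition of $f$ gives $f(1-x)=(1-x)x\,v'(1-x)/v(1-x)$, which is precisely the second summand inside the bracket of $N(x)$ multiplied by $x(1-x)$. This decomposition is the one nontrivial observation; everything else is bookkeeping.

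Once this is set up, I would verify the four claims of the case $\max\{a,b\}<c$ in order. Symmetry about $x=1/2$ is immediate: $N(1-x)=f(1-x)+f(x)=N(x)$. Positivity follows because Lemma~\ref{lem:concave} asserts $f>0$ on $(0,1)$, so $N=f(x)+f(1-x)>0$ on $(0,1)$. For strict concavity, recall from the proof of Lemma~\ref{lem:concave} that
$$
f(x)=a_0 x+\sum_{n=1}^{\infty}(a_n-a_{n-1})x^{n+1},
$$
with $a_n-a_{n-1}<0$ for every $n\ge 1$ (using Lemma~\ref{lem:Kustner} and the argument ruling out the degenerate case via \eqref{eq:DE}). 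Thus
$$
f''(x)=\sum_{n=1}^{\infty}(a_n-a_{n-1})n(n+1)x^{n-1}<0\quad\text{for }x\in(0,1),
$$
and therefore $N''(x)=f''(x)+f''(1-x)<0$ on $(0,1)$, which gives strict concavity. For the monotonicity statements, the symmetry gives $N'(1/2)=f'(1/2)-f'(1/2)=0$, and since $N''<0$ the function $N'$ is strictly decreasing; hence $N'>0$ on $(0,1/2)$ and $N'<0$ on $(1/2,1)$, which is exactly the required monotonicity.

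The boundary case $\max\{a,b\}=c$ is handled via Remark~\ref{rem:simpler}: assuming without loss of generality $0<a\le b=c$, we have $v(x)=(1-x)^{-a}$, hence $f(x)=ax$, and therefore $N(x)=ax+a(1-x)=a$, a positive constant, as claimed.

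The only step with any real content is the decomposition $N(x)=f(x)+f(1-x)$ and the strict sign of $f''$; I do not expect a serious obstacle since the latter is already extracted from the Maclaurin expansion produced in the proof of Lemma~\ref{lem:concave}. The rest is routine exploitation of symmetry and concavity.
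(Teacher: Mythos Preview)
Your proposal is correct and follows exactly the paper's approach: the paper's proof consists of the single observation $N(x)=f(x)+f(1-x)$ with $f$ as in Lemma~\ref{lem:concave}, and then cites Lemma~\ref{lem:concave} and Remark~\ref{rem:simpler}. You have simply spelled out the details of how symmetry, positivity, concavity, and monotonicity follow from that decomposition.
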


\proof
Since $N(x)=f(x)+f(1-x),$ where $f$ is as in Lemma \ref{lem:concave}, the assertions follow from Lemma \ref{lem:concave} and Remark \ref{rem:simpler}.
\qed
\bigskip

We remark that the above function can be written in the form
$N(x)=M(x)/[v(x)v(1-x)],$ where $M$ is the Legendre $M$-function of the hypergeometric function $v$ given by
$$
M(x)=x(1-x)\left[ v'(x)v(1-x)+v(x)v'(1-x)\right],\quad 0<x<1.
$$
See \cite{HVV} and \cite{HLVV} for details.
Obviously, $M(x)$ is symmetric about the point $x=1/2,$
that is, $M(1-x)=M(x)$ for $x\in(0,1).$
We will need the following property of $M$ (see \cite{HLVV}).

\begin{lem}\label{lem:HLVV}
Let $a,b,c$ be positive numbers, let $v(x) = F(a,b;c;x)$, and let $M(x)$ be its Legendre $M$-function.  Then  $M$ is strictly convex, strictly concave, or constant, according as $(a+b-1)(c-b)$ is positive, negative, or zero.
\end{lem}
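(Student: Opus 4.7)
My plan is to compute $M''(x)$ directly from the hypergeometric differential equation \eqref{eq:DE} and to exhibit the factor $(a+b-1)(c-b)$ in the result. Set $\tilde v(x) := v(1-x)$; since $\tilde v'(x) = -v'(1-x)$, the definition of $M$ rewrites as $M(x) = x(1-x)[v'(x)\tilde v(x) - v(x)\tilde v'(x)]$, i.e.\ $x(1-x)$ times a Wronskian-like combination of $v$ and $\tilde v$. The function $\tilde v$ satisfies the companion equation $x(1-x)\tilde v'' + [(a+b+1-c) - (a+b+1)x]\tilde v' - ab\tilde v = 0$, obtained from \eqref{eq:DE} by the substitution $x \mapsto 1-x$.

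For the first derivative, rearranging \eqref{eq:DE} gives $(x(1-x)v')' = [(a+b-1)x + 1 - c]\,v' + ab\,v$, and the analogous identity for $\tilde v$ reads $(x(1-x)\tilde v')' = [(a+b-1)x + (c-a-b)]\,\tilde v' + ab\,\tilde v$. Differentiating $M$ and invoking these two identities, the $ab\,v\tilde v$ contributions cancel and one obtains
$$M'(x) = \ell(x)\,v'(x)\tilde v(x) - \tilde\ell(x)\,v(x)\tilde v'(x),$$
with $\ell(x) := (a+b-1)x + (1-c)$ and $\tilde\ell(x) := (a+b-1)x + (c-a-b)$. Differentiating $M'$ once more, using $\ell'=\tilde\ell' = a+b-1$ and eliminating $v''$ and $\tilde v''$ from the two ODEs, produces $x(1-x)M''(x)$ as a linear combination of the four monomials $v\tilde v,\ v'\tilde v,\ v\tilde v',\ v'\tilde v'$ with polynomial coefficients in $x$ and $(a,b,c)$.

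The main obstacle is the final factorisation: one must regroup that combination as $(a+b-1)(c-b)\,H(x)$ for an $H$ strictly positive on $(0,1)$. A convincing route is polynomial divisibility: verify that specialising to $a+b-1 = 0$ or to $c - b = 0$ kills the coefficient expression identically, so that $(a+b-1)(c-b)$ must factor out of the polynomial. The vanishing when $a+b=1$ is the algebraic shadow of the classical Legendre relation $\ee(r)\ek(r') + \ek(r)\ee(r') - \ek(r)\ek(r') = \pi/2$, which in the special case $a=b=\tfrac12$, $c=1$ makes $M$ identically constant. Positivity of the remaining factor $H(x)$ then follows from the positivity of $v,v',\tilde v,\tilde v'$ on $(0,1)$ under the standing positivity assumptions on $a,b,c$. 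An alternative that avoids some of this bookkeeping is to eliminate $v$ and $\tilde v$ between $M$, $M'$, and the two ODEs to derive a linear second-order ODE for $M$ alone whose coefficients display $(a+b-1)(c-b)$ directly and thereby control the sign of $M''$ on $(0,1)$.
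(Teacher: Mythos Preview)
The paper does not prove this lemma at all: it is imported wholesale from \cite{HLVV} (``see \cite{HLVV}'') with no argument given. So there is no ``paper's proof'' to compare against, only your attempt to supply one.

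Your strategy---compute $M''$ via the hypergeometric ODE and look for the factor $(a+b-1)(c-b)$---is the natural one, and your computation of $M'(x)=\ell(x)v'(x)\tilde v(x)-\tilde\ell(x)v(x)\tilde v'(x)$ is correct. But the proposal stops short of a proof precisely at what you yourself call ``the main obstacle'': you never actually carry out the factorisation of $x(1-x)M''(x)$, you only announce that a divisibility argument \emph{should} work. A referee cannot accept ``verify that specialising \dots\ kills the expression'' as a proof; the verification is the proof.

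More seriously, your divisibility claim for the factor $(c-b)$ is false, and in fact the lemma as literally stated is incorrect in that case. Take $c=b$ with $a>0$ and $a+b\ne 1$. Then $v(x)=F(a,b;b;x)=(1-x)^{-a}$, and a direct computation gives
\[
M(x)=x(1-x)\bigl[a(1-x)^{-a-1}x^{-a}+(1-x)^{-a}a\,x^{-a-1}\bigr]=a\,(x(1-x))^{-a},
\]
which is \emph{strictly convex}, not constant, for every $a>0$. Hence specialising to $c=b$ does \emph{not} kill $M''$, the alleged polynomial divisibility by $(c-b)$ fails, and the ``constant'' case of the trichotomy cannot hold for $c=b$. (Note also that $M$ is symmetric in $a,b$, whereas $(a+b-1)(c-b)$ is not, another warning sign.) Your appeal to the classical Legendre relation handles only the factor $(a+b-1)$; for the application later in the paper one has $c=a+b$, so $c-b=a>0$ and only the sign of $a+b-1$ matters, which is presumably what the cited result in \cite{HLVV} actually covers. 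Finally, even granting the factorisation, ``positivity of $H$ follows from positivity of $v,v',\tilde v,\tilde v'$\,'' is not an argument: $\tilde v'(x)=-v'(1-x)<0$, and the polynomial coefficients multiplying the four monomials can have either sign, so the positivity of $H$ requires genuine work that you have not done.
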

\bigskip

\begin{thm}\label{thm:genconv}
 For positive $a,b,c$ and $x \in (-1,1),$ let $F(x) =
F(a,b;c;x),$ and let $f(x) = F(x)F(1-x).$ If 
$ab/(a+b+1) < c,$ then $f'/f$
is strictly increasing on $(0,1),$ vanishing at $x = 1/2.$ 
Hence, $f$ is strictly log-convex on $(0,1),$ strictly decreasing on 
$(0,1/2),$
strictly increasing on $(1/2, 1),$ with minimum value at $ x = 1/2.$
Further,
\begin{enumerate}
 \item[(1)] If $a+b < c,$ then $f(1-) = F(1) =
\Gamma(c) \Gamma(c-a-b) / [\Gamma(c-a) \Gamma(c-b)],$ 
\item[(2)] If  $ a+b = c,$ then $$f(x) =
(1/B(a,b))[-\log(1-x) + R(a,b)] + O[(1-x)\log(1-x)],$$ as $x \to  1-,$ 
\item[(3)] If $a+b > c,$ then 
$$f(x) = [\Gamma(c) \Gamma(a+b-c)/(\Gamma(a)\Gamma(b))](1-x)^{c-a-b} +o(1),$$ 
as $x \to  1- $.
\end{enumerate}
\end{thm}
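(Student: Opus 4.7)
The plan is to reduce the first assertion to the pointwise inequality $\phi'>0$ on $(0,1)$ for $\phi(x):=F'(x)/F(x)$, and to derive the asymptotic parts directly from \eqref{eq:viides}. Taking the logarithmic derivative of $f$ gives
$$
\frac{f'(x)}{f(x)}=\phi(x)-\phi(1-x),
$$
which vanishes at $x=1/2$ and is odd about that point. Differentiating once more yields $(f'/f)'(x)=\phi'(x)+\phi'(1-x)$, so the strict monotonicity of $f'/f$ (and hence strict log-convexity of $f$, strict monotonicity of $f$ on each side of $1/2$, and the minimum at $1/2$) follows once we establish $\phi'>0$ on $(0,1)$.

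For the positivity of $\phi'$, I would divide the hypergeometric equation \eqref{eq:DE} by $v=F$ to obtain the Riccati-type identity
$$
x(1-x)\phi'(x)=ab-[c-(a+b+1)x]\phi(x)-x(1-x)\phi(x)^2.
$$
A brief Taylor expansion at $x=0$ gives $\phi(0)=ab/c$ and $\phi'(0)=ab\bigl[c(a+b+1)-ab\bigr]/\bigl[c^2(c+1)\bigr]$, which is strictly positive exactly by the hypothesis. Suppose for contradiction that $\phi'$ vanishes somewhere on $(0,1)$ and let $x_1$ be the infimum of its zeros; by continuity $\phi'(x_1)=0$ and $\phi''(x_1)\le 0$. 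Differentiating the Riccati equation and substituting $\phi'(x_1)=0$ produces
$$
\phi''(x_1)=\frac{\phi(x_1)\bigl[(a+b+1)-(1-2x_1)\phi(x_1)\bigr]}{x_1(1-x_1)},
$$
so the sign of $\phi''(x_1)$ equals that of $(a+b+1)-(1-2x_1)\phi(x_1)$.

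For $x_1\ge 1/2$ this is obviously positive, so the only nontrivial case is $x_1\in(0,1/2)$. Here the Riccati identity at $x_1$ pins $\phi(x_1)$ as the positive root of the quadratic $Q(p)=x_1(1-x_1)p^2+[c-(a+b+1)x_1]p-ab$. Evaluating at the threshold $p_0=(a+b+1)/(1-2x_1)$ and simplifying gives
$$
Q(p_0)=\frac{c(a+b+1)}{1-2x_1}+\frac{(a+b+1)^2x_1^2}{(1-2x_1)^2}-ab,
$$
and since $1/(1-2x_1)>1$ on $(0,1/2)$, the hypothesis $ab<c(a+b+1)$ yields $Q(p_0)>c(a+b+1)-ab>0$. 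As $Q(0)=-ab<0$ and $Q$ opens upward, its positive root lies strictly below $p_0$, forcing $(1-2x_1)\phi(x_1)<a+b+1$ and hence $\phi''(x_1)>0$, contradicting $\phi''(x_1)\le 0$.

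The three asymptotics then follow immediately by combining $F(1-x)=1+O(1-x)$ as $x\to 1^-$ with the three cases of \eqref{eq:viides} applied to $F(x)$: in (1) the finite limit transfers directly to $f$; in (2) the product of the logarithmic main term with the $O(1-x)$ piece of $F(1-x)$ produces the $O((1-x)\log(1-x))$ remainder; in (3) the product of the power singularity with $1+O(1-x)$ absorbs into the stated remainder. The main obstacle is the quadratic argument: one has to recognize $p_0=(a+b+1)/(1-2x_1)$ as the right comparison value and see that the hypothesis $ab/(a+b+1)<c$ is precisely what forces $Q(p_0)>0$, ruling out a first critical point of $\phi'$ on the left half of $(0,1)$.
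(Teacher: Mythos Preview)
Your argument is correct and is genuinely different from the paper's. Both proofs start from the same logarithmic-derivative identity
\[
\frac{f'(x)}{f(x)}=\phi(x)-\phi(1-x),\qquad \phi=\frac{F'}{F},
\]
so that everything reduces to showing $\phi'>0$ on $(0,1)$. At this point the paper simply invokes \cite[Theorem~1.3(1)]{AVV2}, which establishes the monotonicity of $F'/F$ under the hypothesis $ab/(a+b+1)<c$; the asymptotic statements are then read off from \eqref{eq:viides}. You instead prove $\phi'>0$ from scratch: you rewrite the hypergeometric differential equation \eqref{eq:DE} as a Riccati equation for $\phi$, compute $\phi'(0)=ab\bigl[c(a+b+1)-ab\bigr]/\bigl[c^2(c+1)\bigr]>0$, and argue by contradiction at a putative first zero $x_1$ of $\phi'$. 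The key step---evaluating the quadratic $Q$ at $p_0=(a+b+1)/(1-2x_1)$ and seeing that $Q(p_0)>c(a+b+1)-ab>0$---isolates precisely where the hypothesis is used. What the paper's route buys is brevity; what yours buys is self-containment and a transparent mechanism showing that $ab/(a+b+1)<c$ is exactly the threshold for $\phi'(0)>0$ and for ruling out a first critical point. Your treatment of the three asymptotic cases via \eqref{eq:viides} matches the paper's in substance and level of detail.
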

\begin{proof}
By logarithmic differentiation, 
$$\frac{f'(x)}{f(x)} = \frac{F'(x)}{F(x)} -\frac{F'(1-x)}{F(1-x)},$$
so that the assertions of convexity and monotonicity
follow from \cite[Theorem 1.3.(1)]{AVV2}. The asymptotic
relations follow from (\ref{eq:diff}). 
\end{proof}
\vspace{.1in}

The next result states some properties of the product 
$$
F(a,b;a+b;x)F(a,b;a+b;1-x)\ \ {\rm with}\ \ x=e^t/(1+e^t),
$$  
which reduces essentially to the function $H(t)$
given in Theorem \ref{thm:C2.12} below when $a=b=1/2.$
\vspace{.1in}

\begin{thm}\label{thm:main}
Let $a$ and $b$ be positive numbers with $ab<a+b.$
%and let $A=\Gamma(c)/[\Gamma(a)\Gamma(b)].$
Define a function $P$ on $\R$ by
$$
P(t) = F\left(a,b;a+b;\frac{e^t}{1+e^t}\right)F\left(a,b;a+b;\frac{1}{1+e^t}\right).
%-\frac{\Gamma(c)}{\Gamma(a)\Gamma(b)}t
$$
Then,
\begin{enumerate}
\item[(1)] $P$ is even, strictly decreasing on $(-\infty ,0]$, 
and strictly increasing on $[0,\infty )$, and $P''(t)>0$ 
(so that $P$ is strictly convex on $\R$).  Moreover,
$$
P(t) = [|t|+R(a,b)]/B(a,b) + O(te^{-|t|}),\ \ {\rm as}\ \ t \to +\infty\ \ {\rm or}\  -\infty.
$$
\item[(2)] The derivative $P'$ is odd, strictly increasing on $\R$,
such that $P'(0) = 0$, and $P'(t) = (|t|/t)B(a,b) + O(te^{-|t|})$, as $t \to +\infty$ or $-\infty$. In particular, we have the sharp inequalities
$- 1/B(a,b) < P'(t) < 1/B(a,b)$, for all $t \in \R$.
\item[(3)] The function $P(t)-t/B(a,b)$ is strictly convex and decreasing, while $P(t) + t/B(a,b)$ is strictly convex and increasing on $\R$. In particular, we have the sharp inequality
$R(a,b)/B(a,b) < P(t) - |t|/B(a,b) \le P(0)$, for all $t \in \R$.
\item[(4)] The function $G(t)=(P(t)-P(0))/t$ is strictly increasing from $\R$
onto $(-1/B(a,b),1/B(a,b))$.
\end{enumerate}
\end{thm}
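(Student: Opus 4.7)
My plan is to reduce everything to the change of variables $x = \sigma(t) := e^t/(1+e^t)$, under which $1-x = \sigma(-t)$, $\sigma'(t) = x(1-x)$, and $\sigma$ is a strictly increasing bijection $\R \to (0,1)$ with $\sigma(0) = 1/2$.  Writing $g(x) = F(a,b;a+b;x)F(a,b;a+b;1-x)$, we have $P(t) = g(\sigma(t))$, so everything stated about $P$ reduces to statements about $g$ composed with $\sigma$.

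For part (1), evenness of $P$ is immediate from $\sigma(-t)=1-\sigma(t)$.  The hypothesis $ab<a+b$ easily implies $ab/(a+b+1)<a+b$, so Theorem \ref{thm:genconv} applies with $c=a+b$: $g$ is strictly log-convex, strictly decreasing on $(0,1/2)$, and strictly increasing on $(1/2,1)$, with minimum at $x=1/2$.  Composing with $\sigma$ gives the monotonicity of $P$ stated in (1).  For the asymptotic, I would use the zero-balanced Ramanujan formula (second case of \eqref{eq:viides}) for $F(x)$ near $x=1$, the expansion $F(1-x) = 1 + O(1-x)$ near $x=1$, and the identity $-\log(1-\sigma(t)) = \log(1+e^t) = |t| + O(e^{-|t|})$ for large $|t|$; the correction coming from $F(1-x)-1 = O(1-x)$ contributes $O((1-x)\log(1-x)) = O(|t|e^{-|t|})$.

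The main obstacle is proving $P''(t) > 0$.  Differentiating gives $P'(t) = x(1-x)\bigl[F'(x)F(1-x) - F(x)F'(1-x)\bigr]$.  Applying Lemma \ref{lem:diff} to convert $(1-x)F'(x)$ and $xF'(1-x)$ into $F(a,b;a+b+1;\cdot)$ and then the Euler transformation $F(a,b;a+b+1;x) = (1-x)\hat F(x)$ with $\hat F(x) := F(a+1,b+1;a+b+1;x)$ (third case of \eqref{eq:viides} with $c-a-b=1$), one obtains the symmetric formula
\[
P'(t) = \frac{ab}{a+b}\,x(1-x)\bigl[\hat F(x)F(1-x) - F(x)\hat F(1-x)\bigr].
\]
Expanding the bracket in the double basis $x^k(1-x)^n$, antisymmetrizing over $(k,n)\leftrightarrow(n,k)$, and using the factorization $x^k(1-x)^n - x^n(1-x)^k = -[x(1-x)]^k(2x-1)\sum_{j=0}^{n-k-1}x^j(1-x)^{n-k-1-j}$ yields
\[
P'(t) = \frac{ab}{a+b}\,x(1-x)(2x-1)\,Q(x),
\]
where $Q$ is symmetric under $x\leftrightarrow 1-x$ and strictly positive on $(0,1)$; positivity of $Q$ relies on the ratio $\hat T_n/T_n = (a+n)(b+n)(a+b)/[ab(a+b+n)]$ of Taylor coefficients being strictly increasing in $n$, a computation consistent with Lemma \ref{lem:Kustner}.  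The remaining technical step --- which I expect to be the hardest --- is to show that the explicit product $x(1-x)(2x-1)Q(x)$ is itself strictly increasing on $(0,1)$.  Since $x(1-x)(2x-1)$ is not monotone near the endpoints, one must exploit the compensating growth of $Q(x)$; the natural route is to combine the binomial-basis positivity of $Q$ with a further application of the totally monotone structure of $\hat F/F$ in order to handle the two regimes $x\in(0,1/2)$ and $x\in(1/2,1)$ separately.

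Parts (2)--(4) then follow essentially by calculus from part (1).  For (2): $P$ even forces $P'$ odd with $P'(0)=0$; $P''>0$ gives $P'$ strictly increasing; differentiating the asymptotic of $P$ yields $P'(t) = \sign(t)/B(a,b) + O(te^{-|t|})$, and strict monotonicity with these limits gives the sharp two-sided bound.  For (3): $(P(t) \mp t/B(a,b))'' = P''(t) > 0$ is strict convexity, while $(P(t) \mp t/B(a,b))' = P'(t) \mp 1/B(a,b)$ has fixed sign by (2), yielding the monotonicity; the bound on $P(t)-|t|/B(a,b)$ follows by monotonicity from $t=0$ (where the value is $P(0)$) to $t\to\pm\infty$ (where the limit is $R(a,b)/B(a,b)$), combining the two half-lines via evenness.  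For (4): strict convexity of $P$ means the chord slope $G(t) = (P(t)-P(0))/t$ is strictly increasing in $t$, and the limits $\pm 1/B(a,b)$ are read off directly from the asymptotic in (1).
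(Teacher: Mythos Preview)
Your reduction to $x=\sigma(t)$, your use of Theorem~\ref{thm:genconv} for the monotonicity of $P$, and your derivation of parts (2)--(4) from (1) all match the paper's argument.  The genuine gap is the proof of $P''(t)>0$, which you yourself flag as incomplete.  Your route---applying the Euler transformation to pass from $w(x)=F(a,b;a+b+1;x)$ to $\hat F(x)=F(a+1,b+1;a+b+1;x)$, then expanding the antisymmetric bracket as a double series and trying to show the resulting product $x(1-x)(2x-1)Q(x)$ is increasing---is unnecessarily hard and you do not finish it.

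The paper avoids all of this by \emph{not} applying the Euler transformation.  Keep $w(x)=F(a,b;a+b+1;x)$: then Lemma~\ref{lem:diff} gives
\[
P'(t)=\frac{ab}{a+b}\bigl[L(x)-L(1-x)\bigr],\qquad L(x):=x\,v(1-x)\,w(x),
\]
so that $P''(t)=\frac{ab}{a+b}\,x(1-x)\bigl[L'(x)+L'(1-x)\bigr]$.  Now $L$ is the product of two positive strictly increasing factors: $w(x)$ has positive Maclaurin coefficients, and $x\,v(1-x)=(1-y)F(a,b;a+b;y)|_{y=1-x}$ is increasing by Lemma~\ref{lem:Vaman}(2) (here the hypothesis $ab<a+b$ is exactly the condition $ab\le c$ with $c=a+b$).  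Hence $L'(x)>0$ and $P''(t)>0$ follows in one line.  The Euler step you inserted dissolves precisely this factorization.

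A smaller issue: in (2) you obtain the asymptotic of $P'$ by ``differentiating the asymptotic of $P$'', which is not legitimate without further argument.  The paper instead reads off the behavior of $P'$ directly from the explicit formula $P'(t)=\frac{ab}{a+b}[L(x)-L(1-x)]$, using the known limit $w(1-)=\Gamma(a+b+1)/(\Gamma(a+1)\Gamma(b+1))$ and the second line of \eqref{eq:viides} for $v(1-x)$.
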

\begin{proof}
The assertions that $P$ is even, hence $P'$ is odd, are obvious.  By symmetry, it is enough to prove the assertions only on $(0,\infty )$.
We next show the rest of (1) and (2).
For brevity, we write $c=a+b, A=1/B(a,b), g(t)=e^t/(1+e^t),$
$v(x)=F(a,b;c;x), v_1(x)=v(1-x), f(x)=v(x)v_1(x),$ and
$w(x)=F(a,b;c+1;x).$
Then, by Lemma \ref{lem:diff}, we obtain
$(1-x)v'(x)=abw(x)/c$ and $xv_1'(x)=-abw(1-x)/c.$
We now put $x=g(t).$
Then $P(t)=f(g(t)), g'(t)=x(1-x)$, and thus
\begin{align*}
P'(t) & = x(1-x)f'(x) = x(1-x)[v'(x)v_1(x)+v(x)v_1'(x)] \\
& = \frac{ab}{c}[xw(x)v(1-x)-(1-x)w(1-x)v(x)] \\
&= \frac{ab}{c}[L(x)-L(1-x)],
%& =  [uv_1 - u_1 v] + (2r -1) vv_1 \\
%& =(rv_1) [((u -(1-r)v))/r ] - (1-r)v [(u_1 - r v_1)/(1-r)],  \\
\end{align*}
where $L(x)=xv(1-x)w(x).$
Since $w(x)>0$ and $w'(x)>0$ and since
$(d/dx)[xv(1-x)]>0$ on $(0,1)$
by Lemma \ref{lem:Vaman}, we have $L'(x)>0$.
Therefore, $$
P''(t)=\frac{ab}{c}x(1-x)[L'(x)+L'(1-x)]>0,\quad t>0,
$$
and hence $P$ is strictly convex.

In order to observe the asymptotic behavior of $P$ and $P',$
we note that $x=g(t)$ satisfies the relation 
$-\log(1-x)=\log(1+e^t)=t+O(e^{-t})$ as $t\to+\infty.$
By \eqref{eq:viides}, we see that
\begin{align*}
 P(t)=v(x)v(1-x)=&A[-\log(1-x)+R(a,b)](1+O(1-x))\\
 =&A[t+R(a,b)](1+O(e^{-t})),\\
\end{align*}
as $t\to+\infty.$ Thus the proof of (1) is now complete.

For $P',$ we need to study the behavior of $w(x).$
By \eqref{eq:viides}, $w(x)\to
cA/(ab)$ as $x\to1-.$
More precisely, by the asymptotic expansion in \cite[15.3.11]{AS},
we obtain
$$
w(x)=\frac{\Gamma(c+1)}{\Gamma(a+1)\Gamma(b+1)}+O(-(1-x)\log(1-x))
$$
as $x\to1-.$
Therefore, $L(x)=cA/(ab)+O(-(1-x)\log(1-x)).$
Since $L(1-x)=O(-(1-x)\log(1-x))$ by \eqref{eq:viides},
we now have $P'(t)=A+O(te^{-t})$ as required.

Assertion (3) follows immediately from (2).
Since (2) implies that $P$ is strictly convex, 
the slope $G(t)$ is strictly increasing.
\end{proof}

\bigskip

The next result gives properties of the quotient of
$F(a,b;a+b;x)$ over $F(a,b;a+b;1-x)$ with $x=e^t/(1+e^t).$
\bigskip

\begin{thm}\label{thm:main2}
Let $a$ and $b$ be positive numbers.
Define functions $Q$ and $q$ on $\R$, respectively, by
$$
Q(t) = \frac{F\left(a,b;a+b;\frac{e^t}{1+e^t}\right)}{F\left(a,b;a+b;\frac{1}{1+e^t}\right)}
\quad\text{and}
\quad
q(t)=\log Q(t).
$$
Then, the following hold:
\begin{enumerate}
\item[(1)] $Q$ is a strictly increasing positive function on $\R$ with the properties
$Q(t)Q(-t)=1$ and $Q(t)=B(a,b)^{-1}[t+R(a,b)]+O(t e^{-t})$ as $t\to+\infty,$
\item[(2)] $q$ is a strictly increasing odd function on $\R$
satisfying $q(t)=\log t-\log B(a,b)+O(1/t)$ as $t\to+\infty,$
\item[(3)] $q'$ is strictly increasing on $(-\infty, 0]$ and strictly decreasing on $[0,\infty )$, so that $q$ is strictly convex on $(-\infty ,0)$ and
strictly concave on $(0,\infty).$
\item[(4)] $q(t)/t$ is strictly decreasing on $(0,\infty )$ and hence,
$q$ is subadditive on $(0,+\infty),$ that is,
$q(t+t')\le q(t)+q(t')$ for $t,t'>0.$
\item[(5)] $Q(t)-t/B(a,b)$ is strictly decreasing and
strictly convex on $(0,\infty )$ when $a+b\ge1.$
\item[(6)] $(R(a,b)+t)/B(a,b)<Q(t)<1+t/B(a,b)$ on $(0,\infty )$ when $a+b\ge1.$
\end{enumerate}
\end{thm}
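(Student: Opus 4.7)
The plan is to exploit the substitution $x = g(t) = e^t/(1+e^t)$, which has $g'(t) = x(1-x)$, $g(0) = 1/2$, and $g(-t) = 1-g(t)$. Setting $v(x) = F(a,b;a+b;x)$ so that $Q(t) = v(x)/v(1-x)$ and $q(t) = \log v(g(t)) - \log v(1-g(t))$, the chain rule gives
$$
q'(t) = x(1-x)\left[\frac{v'(x)}{v(x)} + \frac{v'(1-x)}{v(1-x)}\right] = N(g(t)),
$$
where $N$ is the function from Corollary~\ref{cor:concave} applied with $c = a+b$ (note that $\max\{a,b\} < a+b$ automatically). This identification is the whole engine of the proof: Corollary~\ref{cor:concave} already tells us that $N$ is positive, symmetric about $1/2$, strictly concave, strictly increasing on $(0,1/2]$, and strictly decreasing on $[1/2,1)$. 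Since $g$ is a strictly increasing bijection $\R\to(0,1)$ sending $(-\infty,0]$ onto $(0,1/2]$, part~(3) follows immediately, and the positivity $q' > 0$ gives the monotonicity claims in (1) and (2).

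For the remainders of (1) and (2), the relation $g(-t) = 1 - g(t)$ gives $Q(t)Q(-t) = 1$ and $q(-t) = -q(t)$ at once. The asymptotic expansions come from the zero-balanced case of~\eqref{eq:viides}: since $-\log(1-x) = \log(1+e^t) = t + O(e^{-t})$ and $1 - x = O(e^{-t})$ as $t \to +\infty$, one has $v(x) = A[t + R(a,b)] + O(te^{-t})$ with $A = 1/B(a,b)$, while $v(1-x) = 1 + O(e^{-t})$; dividing yields the stated expansion for $Q$, and taking logarithms produces $q(t) = \log t - \log B(a,b) + O(1/t)$. Part~(4) is then a standard consequence of strict concavity: since $q$ is strictly concave on $(0,\infty)$ with $q(0) = 0$, the function $\psi(t) = tq'(t) - q(t)$ satisfies $\psi(0) = 0$ and $\psi'(t) = tq''(t) < 0$, so that $q(t)/t$ strictly decreases. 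Subadditivity then follows by the classical slope argument: for $0 < t \le t'$, $q(t+t')/(t+t') \le q(t')/t'$ rearranges to $q(t+t') \le q(t') + (t/t')q(t') \le q(t') + q(t)$, the last step using $q(t')/t' \le q(t)/t$.

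The main obstacle, and where the hypothesis $a+b \ge 1$ is essential, is part~(5). The key is to compute
$$
Q'(t) = \frac{M(x)}{v(1-x)^2},
$$
where $M$ is the Legendre $M$-function of $v$ from Lemma~\ref{lem:HLVV}, and then to differentiate once more to obtain
$$
Q''(t) = \frac{x(1-x)\bigl[M'(x)v(1-x) + 2M(x)v'(1-x)\bigr]}{v(1-x)^3}.
$$
Now Lemma~\ref{lem:HLVV} applied with $c = a+b$ has $c - b = a > 0$, so the hypothesis $a+b \ge 1$ gives $(a+b-1)(c-b) \ge 0$, meaning $M$ is either strictly convex or constant. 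Together with the symmetry $M(x) = M(1-x)$ this forces $M'(x) \ge 0$ on $(1/2,1)$, and combined with $M(x) > 0$ and $v'(1-x) > 0$ this yields $Q''(t) > 0$ on $(0,\infty)$---the strict convexity claim. A brief computation using Lemma~\ref{lem:diff} and the first case of~\eqref{eq:viides} shows $(1-x)v'(x) \to A$ and $(1-x)v(x) \to 0$ as $x \to 1-$, whence $M(x) \to A$ and thus $Q'(t) \to A$; the strict monotonicity of $Q'$ then forces $Q'(t) < A$ on all of $(0,\infty)$, completing~(5). Finally, (6) is a simple endpoint comparison: $Q(t) - t/B(a,b)$ strictly decreases from $Q(0) = 1$ down to the limit $AR(a,b)$ supplied by~(1), yielding $AR(a,b) < Q(t) - t/B(a,b) < 1$ on $(0,\infty)$.
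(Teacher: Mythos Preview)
Your proof is correct and follows essentially the same approach as the paper: the identification $q'(t)=N(g(t))$ via Corollary~\ref{cor:concave} drives parts (1)--(4), and the formula $Q'(t)=M(x)/v(1-x)^2$ together with Lemma~\ref{lem:HLVV} handles (5) and (6). The only cosmetic differences are that the paper obtains the asymptotics of $Q$ from those of $P$ in Theorem~\ref{thm:main} via $P(t)=Q(t)v(1-x)^2$ rather than directly, and for (5) it argues that $f'(t)=M(x)/v(1-x)^2-1/B(a,b)$ is increasing as a quotient of an increasing by a decreasing positive function, whereas you differentiate once more to get $Q''>0$; both arguments are equivalent.
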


\proof
We put $g(t)=e^t/(1+e^t)$ and $v(x)=F(a,b;a+b;x)$ as in the proof
of Theorem \ref{thm:main}.
We further set $k(x)=\log[v(x)/v(1-x)]$ for $x\in(0,1).$
Then $q=k\circ g$ and thus
 $$
 q'(t)=k'(g(t))g'(t)=x(1-x)[v'(x)/v(x)+v'(1-x)/v(1-x)]=N(x),
 $$ 
where $x=g(t)$ and $N(x)$ is given in Corollary \ref{cor:concave}.
Since $N(x)>0$ by the corollary, we conclude that $q$ and $Q$ are
both strictly increasing. Positivity and the relation 
$Q(t)Q(-t)=1$ immediately follow from the definition.
The asymptotic behavior of $Q$ follows from the relation
$P(t)=Q(t)v(1-x)^2,\, x=g(t),$ and Theorem \ref{thm:main}(1).
Taking the logarithm in (1), we also obtain the asymptotic behavior of $q$
asserted in (2).

We next show (3).
Since $q$ is odd, it suffices to show that
$q'$ is strictly decreasing on $(0,\infty).$
As we saw above, $q'(t)=N(g(t)).$
By Corollary \ref{cor:concave}, $N(x)$ is strictly decreasing
in $1/2<x<1.$
Therefore, $q'(t)$ is strictly decreasing on $(0,\infty ),$
so that $q(t)$ is strictly concave on $(0,\infty )$.

For (4), since $q$ is strictly concave by (3), it follows that the slope $q(t)/t$ is strictly decreasing on $(0,\infty )$.  The subadditivity follows from \cite[Lemma 1.24]{AVV1}.

To show (5), we put $f(t)=Q(t)-t/B(a,b).$
Then, by $Q'(t)=q'(t)Q(t)=N(x)v(x)/v(1-x)$ with $x=e^t/(1+e^t),$
we have
$$
f'(t)=\frac{N(x)v(x)}{v(1-x)}-\frac1{B(a,b)}
=\frac{M(x)}{[v(1-x)]^2}-\frac1{B(a,b)}.
$$
By Lemma \ref{lem:HLVV}, when $a+b\ge1,~M(x)$ is increasing 
and positive in $1/2<x<1$
and $v(1-x)^2$ is strictly decreasing on $1/2<x<1.$
Thus $f'(t)$ is strictly increasing on $1/2<x<1,$ and hence on $0<t<\infty ,$
which means that $f(t)$ is strictly convex in $0<t<\infty .$
Now $M(x)=ab[L(x)+L(1-x)]/(a+b),$ where $L(x)$ is as
in the proof of Theorem \ref{thm:main}.
Thus, by (\ref{eq:viides}), we see that $M(x)\to 1/B(a,b)$ as $x\to1-.$
Since $f'(t)$ is strictly increasing in $1/2<x<1,$
we conclude that $f'(t)<\lim_{t\to+\infty}f'(t)=0,$ which
implies that $f(t)$ is strictly decreasing.

Finally, (6) follows from (5) and the fact that $f(0)=1$
and $\lim_{t\to+\infty}f(t)=R(a,b)/B(a,b)$
by \eqref{eq:viides}.
\qed
\bigskip

%Numerical experiments suggest the following
%(cf. \cite[Lemma 5.4]{SV}).
%\begin{conj}
%$$
%\log(1+q'(0)t)\le q(t)\le \log(1+t/B(a,b)),\quad t\ge0.
%$$
%\end{conj}
%\bigskip

%Theorem \ref{thm:main2}(6) yields the upper estimate
%for the case $a+b>1.$

\bigskip

%%%%%%%%%%%%%%%%%%%%
\section{Applications to hyperbolic metric}
%%%%%%%%%%%%%%%%%%%%

In \cite{SV}, the function
\begin{equation}\label{eq:h}
h(t)=e^t\lambda_{0,1}(-e^t)=\pi\left[
8\ek\Big(\frac1{\sqrt{1+e^t}}\Big)\ek\Big(\frac1{\sqrt{1+e^{-t}}}\Big)
\right]^{-1},\  t\in \R,
\end{equation}
plays a special role in the estimation of the hyperbolic metric of a general
plane hyperbolic domain.
Let $\Omega$ be a hyperbolic domain in $\C$ and set
$$
m(a,s)=\inf_{b\in\partial\Omega}\big|s-\log|b-a|\big|
$$
for $a\in\partial\Omega$ and $s\in\R.$
Then we have, for instance, 
$$
h(m(a,\log|z-a|))\le |z-a|\rho_\Omega(z)\le \frac\pi{4m(a,\log|z-a|)},
\quad z\in\Omega,
$$
for every $a\in\partial\Omega.$
By the inequality $h(t)\ge 1/(2|t|+2C_0),$ which is essentially due to J.~Hempel,
we can reproduce the sharp version of the Beardon-Pommerenke inequality
(see \cite{SV} for details). Here,
\begin{equation}\label{eq:Hart}
C_0=\frac1{2\lambda_{0,1}(-1)}=\frac4\pi\ek(1/\sqrt2)^2
=\frac{\Gamma(1/4)^4}{4\pi^2}\approx 4.37688.
\end{equation}

In the course of their investigation, the second and fourth authors
arrived at some conjectures (Conjecture 2.12 in \cite{SV}), 
which we are now able to prove.
We remark that there is a slight error in the statement of Conjecture 2.12(3)
of \cite{SV}: the interval $(-\pi/4,\pi/4)$ has to be replaced by
$(-2,2)$ as in (3) below.

\begin{thm}\label{thm:C2.12}
\hfill

\begin{enumerate}
\item[(1)] The function $t\,h(t)$ is strictly increasing from $(0,\infty )$ onto $(0,1/2)$.
\item[(2)] The even function $H(t)=1/h(t)$ satisfies the condition $H''(t)>0$ and is a strictly convex self-homeomorphism of $\R$.
\item[(3)] The odd function $H'(t)$ maps $\R$ homeomorphically
onto the interval $(-2,2).$
\item[(4)] $2(|t|+C_0)h(t)<1.25$ for $t\in\R.$
\item[(5)] $1/(|t|+C_0)<2h(t)<1/(|t|+\log 16)$, for $t\in \R$.
\end{enumerate}
\end{thm}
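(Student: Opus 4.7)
My plan is to reduce Theorem \ref{thm:C2.12} to Theorem \ref{thm:main} by specializing to $a=b=1/2$. Using \eqref{eq:K} and the substitution $x=e^t/(1+e^t)$ — which gives $1/\sqrt{1+e^t}=\sqrt{1-x}$ and $1/\sqrt{1+e^{-t}}=\sqrt{x}$ — the defining formula \eqref{eq:h} collapses to $h(t)=1/[2\pi P(t)]$, where $P$ is the function introduced in Theorem \ref{thm:main} with parameters $a=b=1/2$. In this case $B(a,b)=\pi$ and $R(a,b)=\log 16$, and evaluating at $t=0$ and comparing with \eqref{eq:Hart} yields the crucial identity $\pi P(0)=C_0$. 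Thus $H(t)=1/h(t)=2\pi P(t)$, and every assertion of Theorem \ref{thm:C2.12} becomes a statement about $P$.

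With this dictionary in hand, parts (2), (3), and (5) fall out of Theorem \ref{thm:main}: part (2) from the strict convexity $P''>0$ of (1) there; part (3) from the oddness, strict monotonicity, and sharp range $(-1/\pi,1/\pi)$ of $P'$ in (2); and part (5) from the double inequality $\log 16/\pi<P(t)-|t|/\pi\le P(0)=C_0/\pi$ of (3), equivalently $|t|+\log 16<\pi P(t)\le|t|+C_0$, which yields the two-sided bound on $2h(t)=1/[\pi P(t)]$ upon taking reciprocals. For (1) I would differentiate $t\,h(t)=t/[2\pi P(t)]$: its sign is controlled by $P(t)-tP'(t)$, whose own derivative is $-tP''(t)<0$. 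Hence $P(t)-tP'(t)$ is strictly decreasing from $P(0)$ on $(0,\infty)$ and tends to $\log 16/\pi>0$ (using the asymptotics $P(t)=(t+\log 16)/\pi+O(te^{-t})$ and $P'(t)=1/\pi+O(te^{-t})$ from Theorem \ref{thm:main}). Thus $P(t)>tP'(t)$ throughout, so $t\,h(t)$ is strictly increasing; the boundary limits $0$ and $1/2$ follow immediately.

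The main obstacle is (4), because the constant $5/4$ is numerically tight: one checks that the maximum of $R(t):=2(|t|+C_0)h(t)=(|t|+C_0)/[\pi P(t)]$ is attained near $t\approx 2$ and lies just below $5/4$. The naive estimate from (5), namely $R(t)<(|t|+C_0)/(|t|+\log 16)$, only gives $R\le C_0/\log 16\approx 1.58$ — far from the goal. My approach is to exploit the structure at the unique critical point: $\pi P(t)-(t+C_0)\pi P'(t)$ has derivative $-(t+C_0)\pi P''(t)<0$, so it decreases strictly from $2C_0>0$ to $2\log 16-2C_0<0$, vanishing at a unique $t^{*}>0$. At this point $R(t^{*})=\max R=1/[\pi P'(t^{*})]$, so (4) reduces to proving the sharp bound $\pi P'(t^{*})>4/5$. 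Establishing this refined inequality at the specific point $t^{*}$ is the hard step and will presumably require the explicit estimates for the $O(te^{-t})$ remainder in the zero-balanced asymptotic, such as those in \cite[Theorem 1.52]{AVV1}, combined with the strict monotonicity of $\pi P'$.
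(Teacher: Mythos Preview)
Your reduction $H(t)=2\pi P(t)$ with $a=b=1/2$, $B=\pi$, $R=\log 16$, $\pi P(0)=C_0$, and the derivation of parts (2), (3), (5) from Theorem~\ref{thm:main} are exactly what the paper does. Your treatment of (1) via $P(t)-tP'(t)>0$ is also the same idea, just spelled out more explicitly than the paper's one-line appeal to Theorem~\ref{thm:main}(3).

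For (4) you have the right architecture --- the paper proceeds identically: set $g(t)=\pi P(t)-(t+C_0)\pi P'(t)$, note $g'(t)=-(t+C_0)\pi P''(t)<0$, so $g$ has a unique zero $t_0$ at which the maximum equals $1/[\pi P'(t_0)]$. Two small slips: the value at $t=0$ is $\pi P(0)=C_0$, not $2C_0$, and the limit as $t\to\infty$ is $\log 16-C_0$, not twice that (the signs are unaffected); also the critical point sits near $t_0\approx 2.57$, not $t\approx 2$. More importantly, the finish is much simpler than the asymptotic-remainder route you propose. Since $g$ is strictly decreasing and $\pi P'$ is strictly increasing, it suffices to exhibit a single point $t_1$ with $g(t_1)>0$ (hence $t_1<t_0$) and $\pi P'(t_1)>4/5$. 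The paper takes $t_1=2.56$, checks numerically that $g(t_1)>0$, and then computes $1/[\pi P'(t_1)]<1.248<1.25$; no control of the $O(te^{-t})$ term is needed.

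One further remark on (5): you correctly obtain $|t|+\log 16<\pi P(t)\le |t|+C_0$, with equality on the right at $t=0$. Thus the left inequality in the stated two-sided bound for $2h(t)$ is actually an equality at $t=0$; the strict inequality holds only for $t\ne 0$.
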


\proof
First, in view of \eqref{eq:K}, we have the relation
\begin{equation}
H(t)=\frac1{h(t)}=2\pi P(t),
\end{equation}
where $P(t)$ is the function defined in Theorem \ref{thm:main}
with the parameters $a=b=1/2$,
so that $B(1/2,1/2)=\pi.$
Thus assertions (2) and (3) follow, respectively, from (1) and (2) of Theorem \ref{thm:main}.

Assertion (1) is equivalent to the statement that $H(t)/t$ is strictly decreasing on $(0,\infty )$,
which follows from Theorem \ref{thm:main}(3) since $R(1/2,1/2) = \log 16 > 0$.  The limiting values are clear.

For (4),
since $h$ is even, it suffices to show the inequality for $t\ge 0.$
Let
$$
G(t)=(t+C_0)h(t)=\frac{t+C_0}{H(t)}.
$$
Then $G'(t)=g(t)/H(t)^2,$ where $g(t)=H(t)-(t+C_0)H'(t)=2\pi[P(t)-(t+C_0)P'(t)].$
Since $g'(t)=-(t+C_0)H''(t)<0$ by (2), the function $g$ is
strictly decreasing on $[0,+\infty).$
Noting that $H'(0)=0,$ we see that $g(0)=H(0)>0.$
By Theorem \ref{thm:main} (1) and (2),
$$
P(t)-(t+C_0)P'(t)=\frac{R(\tfrac12,\tfrac12)-C_0}{B(\tfrac12,\tfrac12)}+O(t^2e^{-t})
$$
as $t\to+\infty.$
Since
$$
R(\tfrac12,\tfrac12)-C_0=2\log4-\frac{\Gamma(1/4)^4}{4\pi^2}
\approx -1.6043,
$$
we see that $\lim_{t\to+\infty}g(t)<0.$
Therefore, there is a unique zero $t=t_0$ of $g(t)$ on $(0,\infty )$, so that $G$ is strictly increasing on $[0,t_0]$ and strictly decreasing on $[t_0,\infty)$.  Thus, the function $G(t)$ takes its maximum at $t=t_0.$
Hence,
$$
\max_{0\le t<\infty } G(t)=\frac{t_0+C_0}{H(t_0)}=\frac1{H'(t_0)}.
$$
Let $t_1=2.56.$
Then, by a numerical computation, we observe that
$g(t_1)>0.02$ and thus $t_1<t_0.$
Since $H'(t)=2\pi P'(t)$ is increasing by Theorem \ref{thm:main}(2),
by another numerical computation we conclude that
$$
\max_{t\in\R}2(|t|+C_0)h(t)=\frac2{H'(t_0)}<\frac2{H'(t_1)}<1.248<1.25.
$$

Finally, (5) follows from Theorem \ref{thm:main}(3), if we put $a = b = 1/2$ and observe that $R(1/2,1/2) = \log 16$ and $B(1/2,1/2) = \pi$.
\qed
\bigskip

A numerical experiment suggests that $t_0\approx 2.56944$
and $2/H'(t_0)\approx 1.24477.$

Theorem \ref{thm:C2.12} has an application to the hyperbolic metric.
For a hyperbolic domain $\Omega$ in $\C,$ we consider the quantity
$$
\sigma_\Omega(z)=\sup_{a,b\in\partial\Omega}\lambda_{a,b}(z),\quad
z\in\Omega.
$$
Since $\Omega\subset\C\setminus\{a,b\},$
we have $\rho_\Omega(z)\ge\lambda_{a,b}(z)$ for $a,b\in\partial\Omega.$
Thus, $\sigma_\Omega(z)\le\rho_\Omega(z).$
Gardiner and Lakic \cite{GL01} proved that $\rho_\Omega(z)\le
A \sigma_\Omega(z)$ for an absolute constant $A.$
We denote by $A_0$ the smallest possible constant $A.$
In \cite{SV} it is shown that
$A_0\le 2C_0+\pi/2\approx10.3246$
and observed (see Remark 3.2 in \cite{SV})
that this could be improved to $A_0\le 1/h(\pi/4)=H(\pi/4)
\approx 9.0157$ if assertion (1) in Theorem \ref{thm:C2.12} were true.
We now have this assertion.
We remark that Betsakos \cite{Bet} recently proved a stronger
inequality which leads to $A_0\le 8.27.$

In \cite{SV}, the function $\varphi(t)=2\Phi(e^{t/2})$
plays an important role in the estimation of the hyperbolic
distance, where $\Phi(x)$ is as in (\ref{eq:Phi}).
In view of \eqref{eq:Phi} and \eqref{eq:K},
we have the expression
$$
\varphi(t)=\log
\frac{F\big(\tfrac12,\tfrac12;1;\frac{e^{t/2}}{1+e^{t/2}}\big)}%
{F\big(\tfrac12,\tfrac12;1;\frac{1}{1+e^{t/2}}\big)}
=q(t/2),
$$
where $q$ is the function defined in Theorem \ref{thm:main2}
with $a=b=1/2.$
Thus, as a corollary of Theorem \ref{thm:main2}, we obtain the following.

\begin{cor}\label{cor:phi}
The function $\varphi(t)/t$ is strictly decreasing on $(0,\infty )$, implying that 
$\varphi(t)$ is subadditive on $(0,\infty )$.
\end{cor}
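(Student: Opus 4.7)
The plan is to reduce the corollary directly to Theorem \ref{thm:main2}(4) via the identity $\varphi(t) = q(t/2)$ displayed just before the corollary statement, using parameter values $a = b = 1/2$ (so $B(1/2,1/2) = \pi$ and $R(1/2,1/2) = \log 16$).

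For the strict decrease of $\varphi(t)/t$, I would write, setting $s = t/2$,
$$
\frac{\varphi(t)}{t} = \frac{q(t/2)}{t} = \frac12 \cdot \frac{q(s)}{s}.
$$
Theorem \ref{thm:main2}(4) asserts that $s \mapsto q(s)/s$ is strictly decreasing on $(0,\infty)$, and the map $t \mapsto t/2$ is an order-preserving bijection of $(0,\infty)$ onto itself, so $\varphi(t)/t$ is strictly decreasing on $(0,\infty)$. This gives the first assertion.

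For the subadditivity, Theorem \ref{thm:main2}(4) also states that $q$ is itself subadditive on $(0,\infty)$; hence, for $t, t' > 0$,
$$
\varphi(t+t') = q\!\left(\frac{t+t'}{2}\right) = q\!\left(\frac{t}{2} + \frac{t'}{2}\right) \le q\!\left(\frac{t}{2}\right) + q\!\left(\frac{t'}{2}\right) = \varphi(t) + \varphi(t').
$$
Equivalently, one could appeal to \cite[Lemma 1.24]{AVV1}, already used in the proof of Theorem \ref{thm:main2}(4), to derive subadditivity from the monotonicity of $\varphi(t)/t$ alone. There is essentially no obstacle here: all the substantive analytic work (the concavity of $N(x)$ from Corollary \ref{cor:concave} and the asymptotic behavior of the relevant hypergeometric functions) has already been carried out in Theorem \ref{thm:main2}, and the corollary is merely a specialization to the classical complete elliptic integral case together with the change of variable $x = e^{t/2}/(1+e^{t/2})$ that converts $2\Phi(e^{t/2})$ into $q(t/2)$.
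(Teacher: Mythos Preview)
Your proposal is correct and follows exactly the route the paper indicates: the text immediately preceding the corollary establishes $\varphi(t)=q(t/2)$ with $a=b=1/2$ and then states the result ``as a corollary of Theorem~\ref{thm:main2},'' without writing out any further proof. Your specialization via Theorem~\ref{thm:main2}(4) and the substitution $s=t/2$ is precisely what is intended.
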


The statement of Corollary \ref{cor:phi} was given as
Conjecture 5.9 in \cite{SV}.
This conjecture has recently been settled by Baricz \cite{Bar07}
by a different method.

As an application of Corollary \ref{cor:phi}, we give an
improvement of Theorem 5.12 in \cite{SV}.

\begin{thm}\label{thm:appl}
Let $a_0, a_1, a_2, \dots$ be an infinite sequence of
distinct complex numbers with the properties
\begin{enumerate}
\item[(1)] $0=|a_0|<|a_1|\le |a_2|\le \dots,$
\item[(2)] $|a_{n+1}|\le e^c |a_n|$ for $n=1,2,3,\dots,$
where $c>0$  is a constant, and
\item[(3)] $|a_n|\to\infty$ as $n\to\infty.$
\end{enumerate}
If a domain $\Omega$ in $\C$ omits all the points $a_n,$ then
\begin{equation}\label{eq:le}
d_\Omega(z_1,z_2)\ge
A\big(\log|z_2|-\log|z_1|\big)-B
\end{equation}
for $z_1, z_2\in\Omega$ with $e^{-c/2}|a_1|\le |z_1|\le |z_2|.$
Here, $A=\varphi(c)/c$ and $B=\varphi(c)-\varphi(c/2).$
\end{thm}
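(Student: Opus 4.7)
The plan is to combine the inclusion estimate $d_\Omega\ge d_{0,a_k}$ with a chaining argument along the log-modulus interval $[\log|z_1|,\log|z_2|]$, converting the resulting $\varphi$-increments into the stated linear bound via Corollary~\ref{cor:phi}. First, since $\Omega\subset\C\setminus\{0,a_k\}$ for every $k\ge 1$, the comparison principle together with the scaling $d_{0,a}(z,w)=d_{0,1}(z/a,w/a)$ and the radial estimate $d_{0,1}(z,w)\ge |\Phi(|z|)-\Phi(|w|)|$ recalled in \S\ref{sect:intro}, combined with the identity $2\Phi(x)=\varphi(2\log x)$, yields for any $k\ge 1$ and any $\zeta_1,\zeta_2\in\Omega$ with $|\zeta_1|\le|\zeta_2|$
\[
d_\Omega(\zeta_1,\zeta_2)\ \ge\ \tfrac{1}{2}\bigl[\varphi(2\log(|\zeta_2|/|a_k|))-\varphi(2\log(|\zeta_1|/|a_k|))\bigr].
\]

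Second, I would use the hypotheses to build a chain of punctures spanning $[|z_1|,|z_2|]$ in log-scale. The gap condition $|a_{n+1}|\le e^c|a_n|$ together with $|z_1|\ge e^{-c/2}|a_1|$ implies that for every real $r\ge e^{-c/2}|a_1|$ the largest index $k$ with $|a_k|\le r e^{c/2}$ satisfies $|a_k|/r\in(e^{-c/2},e^{c/2}]$. Setting $\ell=\log(|z_2|/|z_1|)$, $N=\lfloor \ell/c\rfloor$, intermediate radii $r_i=|z_1|e^{ic}$ for $i=0,\dots,N-1$ and $r_N=|z_2|$, I choose indices $k_i$ so that $|a_{k_i}|$ lies close to the geometric mean $\sqrt{r_{i-1}r_i}$ within the guaranteed tolerance.

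Third, for a nearly length-minimizing path $\gamma\colon [0,1]\to\Omega$ from $z_1$ to $z_2$, I would partition $\gamma$ at first-passage times $t_i$ with $|\gamma(t_i)|=r_i$, which exist by continuity of $|\gamma|$ along a path connecting moduli $|z_1|$ and $|z_2|$. On each arc $\gamma_i$, whose end-moduli are $r_{i-1}$ and $r_i$, the comparison $\rho_\Omega\ge\lambda_{0,a_{k_i}}$ together with the first displayed inequality gives
\[
\int_{\gamma_i}\rho_\Omega\,|dz|\ \ge\ \tfrac{1}{2}\bigl[\varphi(2\log(r_i/|a_{k_i}|))-\varphi(2\log(r_{i-1}/|a_{k_i}|))\bigr].
\]
For an interior $i$ this summand is essentially $\varphi(c)$ (the alignment offset $\tau_i\in[-c/2,c/2]$ is absorbed using the concavity of $\varphi$ on $[0,\infty)$ from Theorem~\ref{thm:main2}(3)), while the final, possibly truncated segment contributes at least $\varphi(c/2)$ thanks to the buffer $e^{-c/2}|a_1|\le|z_1|$. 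Summing and invoking Corollary~\ref{cor:phi}, specifically the monotonicity $\varphi(t)/t\ge \varphi(c)/c=A$ for $t\le c$, to convert each full contribution into $Ac$ and to handle the residual log-length at the same slope, the total becomes at least $A\ell - B$.

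The main obstacle is the alignment book-keeping: the $a_k$'s are at discrete positions, so each per-segment bound falls short of the idealized $\varphi(c)$ by an amount depending on the offset $\log(|a_{k_i}|/\sqrt{r_{i-1}r_i})$, and I must sum these losses carefully. The buffer $e^{-c/2}|a_1|\le|z_1|$ supplies exactly the $c/2$ of slack at the low end that is consumed by the $B=\varphi(c)-\varphi(c/2)$ correction; getting this accounting to match cleanly is the delicate step. A secondary subtlety is possible non-monotonicity of $|\gamma|$, resolved by using first-passage times, since any additional oscillation only increases $\int_\gamma \rho_\Omega\,|dz|$ and therefore preserves the direction of the inequality.
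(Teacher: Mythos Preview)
Your overall strategy---compare with twice-punctured planes, split a near-geodesic at chosen radii, and convert the $\varphi$-increments via the monotonicity of $\varphi(t)/t$---is the right one, but the partition you propose does not deliver the stated constants. The difficulty is precisely the ``absorbed using concavity'' step you flag as delicate. With your uniform grid $r_i=|z_1|e^{ic}$ and offset $\tau_i=\log|a_{k_i}|-\log\sqrt{r_{i-1}r_i}\in(-c/2,c/2]$, an interior summand equals, by the oddness of $\varphi$,
\[
\tfrac12\bigl[\varphi(c-2\tau_i)+\varphi(c+2\tau_i)\bigr],
\]
and concavity of $\varphi$ on $[0,\infty)$ says this is \emph{at most} $\varphi(c)$, not at least $\varphi(c)$: the inequality points the wrong way for a lower bound. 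In the extremal case $\tau_i=c/2$ the summand is $\tfrac12\varphi(2c)$, and since $\varphi(2c)/(2c)<\varphi(c)/c$ by the very monotonicity you invoke, the per-segment deficit $\varphi(c)-\tfrac12\varphi(2c)>0$ accumulates over the $N\sim\ell/c$ segments. Your argument therefore yields \eqref{eq:le} only with the weaker slope $\varphi(2c)/(2c)$ in place of $A=\varphi(c)/c$.

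The paper avoids this loss by anchoring the partition to the punctures themselves rather than to a uniform grid. It quotes Theorem~1.7 of \cite{SV}, which (in effect) splits the path at the geometric means $\sqrt{|a_{n-1}a_n|}$ and uses $a_n$ as the comparison puncture on the $n$-th arc; the oddness $\varphi(-s)=-\varphi(s)$ then makes the half-contributions from the two arcs abutting $a_n$ combine \emph{exactly} into $\varphi(t_n-t_{n-1})$ with $t_n=\log|a_n|$, leaving no offset at all:
\[
d_\Omega(z_1,z_2)\ \ge\ \tfrac12\varphi\bigl(t_k-\log|z_1|\bigr)+\sum_{n=k+1}^{l}\varphi(t_n-t_{n-1})+\tfrac12\varphi\bigl(\log|z_2|-t_l\bigr).
\]
Now every interior argument is a genuine gap $t_n-t_{n-1}\le c$, so $\varphi(t_n-t_{n-1})\ge A\,(t_n-t_{n-1})$ applies with no loss, and the two endpoint arguments have absolute value at most $c/2$, which produces exactly $B=\varphi(c)-\varphi(c/2)$ after telescoping. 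Your path-splitting machinery is fine; the fix is to place the break points at the $\sqrt{|a_{n-1}a_n|}$ and center each comparison on $a_n$, so that the alignment offsets cancel by symmetry instead of having to be ``absorbed''.
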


\proof
We use the same argument indicated in \cite[p.~901]{SV}.

Choose integers $k$ and $l$ with $1\le k\le l$, so that
$\sqrt{|a_ka_{k-1}|}\le |z_1|\le\sqrt{|a_ka_{k+1}|}$ and
$\sqrt{|a_la_{l-1}|}\le |z_2|\le\sqrt{|a_la_{l+1}|}$ hold.
Then, Theorem 1.7 in \cite{SV} gives us
$$
d_\Omega(z_1,z_2)\ge\frac12\varphi(t_k-\log|z_1|)+
\sum_{n=k+1}^l \varphi(t_n-t_{n-1})
+\frac12\varphi(\log|z_2|-t_l),
$$
where $t_n=\log|a_n|.$
Noting that $t_n-t_{n-1}\le c,~t_k-\log|z_1|\le c/2$ and $\log|z_2|-t_l\le c/2,$
we deduce from the monotonicity of $\varphi(t)/t$ the chain of inequalities
\begin{align*}
&\quad d_\Omega(z_1,z_2) \\
&\ge \frac12 (t_k-\log|z_1|)\frac{\varphi(c/2)}{c/2}
+\sum_{n=k+1}^l (t_n-t_{n-1})\frac{\varphi(c)}{c}
+\frac12 (\log|z_2|-t_l)\frac{\varphi(c/2)}{c/2} \\
&=\frac{\varphi(c)}{c}(\log|z_2|-\log|z_1|)
-\frac{\varphi(c)-\varphi(c/2)}{c}(t_k-\log|z_1|+\log|z_2|-t_l) \\
&\ge\frac{\varphi(c)}{c}(\log|z_2|-\log|z_1|)+\varphi(c)-\varphi(c/2).
\end{align*}
\qed

Under the same hypothesis as in Theorem \ref{thm:appl},
Theorem 5.12 in \cite{SV} asserts \eqref{eq:le} with
$A=h(c/2)$ and $B=0,$ where $h$ is given by \eqref{eq:h}.
Also, (5.17) of \cite{SV} gives \eqref{eq:le} with
$A=(1/c)\log(1+c/(2C_0))$ and $B=c/(4\pi).$
Compare the graphs of the functions $\varphi(c)/c, h(c/2)$ and
$(1/c)\log(1+c/(2C_0))$ (Figure \ref{fig:1}).

\begin{figure}[htbp]
\includegraphics[width=10cm, height=7.5cm]{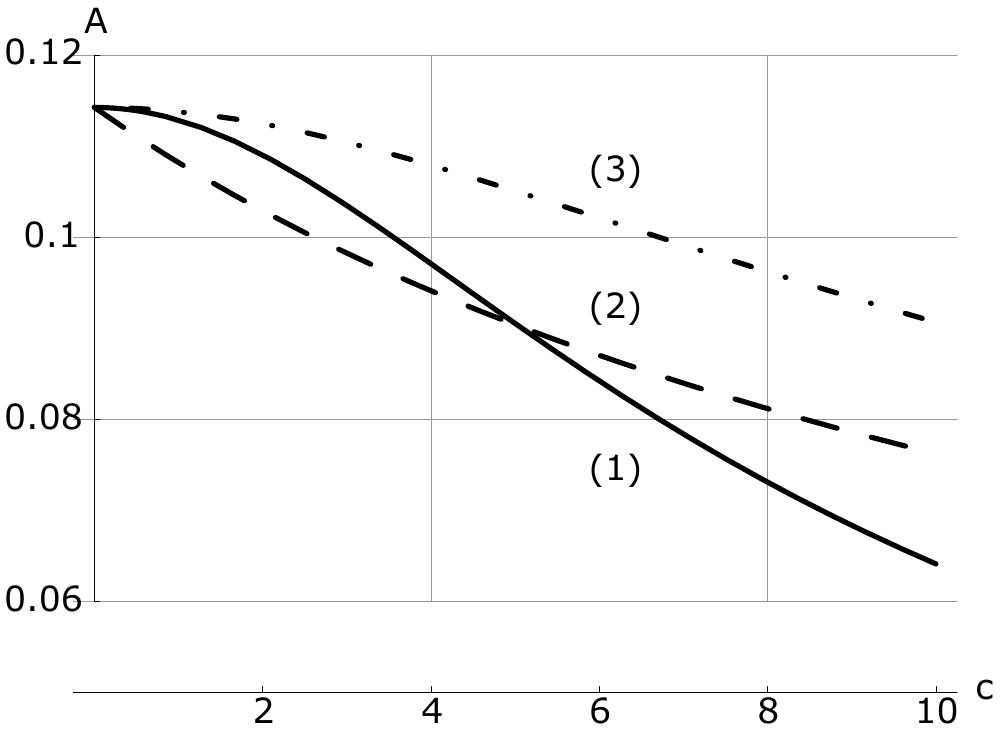}
\caption{Graphs of $\varphi(c)/c$ (solid line), $h(c/2)$ (dashed line) and $(1/c)\log(1+c/(2C_0))$
(dotted line)}
\label{fig:1}
\end{figure}

\bigskip\bigskip

%\bibliographystyle{amsplain}
%\bibliography{papers}

\begin{thebibliography}{AAAAA}

\bibitem[AS]{AS} \textsc{M. Abramowitz and I. A. Stegun, eds:}
\emph{Handbook of Mathematical Functions with Formulas, Graphs and
Mathematical Tables}, Dover, New York, 1965.



\bibitem[Ah]{Ah}  \textsc{L. V. Ahlfors:} \emph{Conformal invariants:
Topics in Geometric Function Theory},
McGraw-Hill Series in Higher Mathematics.
McGraw-Hill Book Co., New York, 1973.


\bibitem[ASVV]{ASVV2} \textsc{
G.~D. Anderson, T.~Sugawa, M.~K. Vamanamurthy, and M.~Vuorinen,}
  \emph{Hyperbolic metric on the twice-punctured sphere with one conic
  singularity}, in preparation.

\bibitem[AVV1]{AVV1} \textsc{G. D. Anderson, M. K. Vamanamurthy, and
M. Vuorinen:} \emph{Conformal Invariants, Inequalities, and
Quasiconformal Maps}, J. Wiley, 1997.


\bibitem[AVV2]{AVV2} \textsc{G. D. Anderson,  
M. K. Vamanamurthy, and M. Vuorinen}: 
\emph{ Generalized convexity and inequalities,} 
J. Math. Anal. Appl. 335 (2007),1294-1308, 
doi:10.1016/j.jmaa.2007.02.016, arXiv math.CA/0701262.

\bibitem[Ba]{Bar07} \textsc{{\'A}.~Baricz,} \emph{Tur\'an type 
inequalities for generalized complete
elliptic integrals}, Math. Z. {\bf 256} (2007), 895--911.


\bibitem[BP]{BP78} \textsc{A.~F. Beardon and {Ch}.~Pommerenke,} 
\emph{The {P}oincar\'e metric of plane domains}, 
J. London Math. Soc. (2) \textbf{18} (1978), 475--483.

\bibitem[Be]{Bet} \textsc{D.~Betsakos,} 
\emph{Estimation of the hyperbolic metric by using the
punctured plane}, Math. Z. doi:10.1007/s00209-007-0218-0

%\bibitem[BK]{BK} \textsc{M. Biernacki and J. Krzy\.z:}
%On the monotonicity of certain functionals in the theory of analytic functions.
%\emph{Ann. Univ. M. Curie-Sk$\mbox{\l}$odowska} {\bf 2} (1995), 134--145.


\bibitem[GL]{GL01} \textsc{F.~P. Gardiner and N.~Lakic,} 
\emph{Comparing {P}oincar\'e densities}, Ann. of
  Math. \textbf{154} (2001), 245--267.


\bibitem[HLVV]{HLVV} \textsc{V.~Heikkala, H.~Lind\'en, 
M.~K. Vamanamurthy, and M.~Vuorinen,}
 \emph{Generalized elliptic integrals and the Legendre M-function}, 
J. Math. Anal. Appl. \textbf{338} (2008), 223-243, doi 
10.1016/j.jmaa.2007.05.020. arXiv math.CA/0701438.

\bibitem[HVV]{HVV} \textsc{V.~Heikkala, M.~K. Vamanamurthy, and M.~Vuorinen,}
\emph{Generalized elliptic integrals}, \emph{Comput. Methods Funct. Theory}
(to appear), arxiv Math.CA/0701436.

\bibitem[H1]{Hempel79} \textsc{J.~A. Hempel,} 
\emph{The {P}oincar\'e metric on the twice punctured
plane and the theorems of {L}andau and {S}chottky}, J. London Math. Soc. (2)
  \textbf{20} (1979), 435--445.

\bibitem[H2]{Hempel80} \bysame, 
\emph{Precise bounds in the theorems of {S}chottky and {P}icard},
J. London Math. Soc. (2) \textbf{21} (1980), 279--286.

\bibitem[K]{Kus02} \textsc{R.~K\"ustner,} 
\emph{Mapping properties of hypergeometric functions
and convolutions of starlike or convex functions of order $\alpha$}, Comput.
  Methods Funct. Theory \textbf{2} (2002), 597--610.


\bibitem[LV]{LV} \textsc{O. Lehto and K. I. Virtanen,} \emph{Quasiconformal
Mappings in the Plane}, 2nd ed., Springer-Verlag, New York, 1973.

\bibitem[LVV]{LVV} \textsc{O. Lehto, K. I. Virtanen, and J. V\"ais\"al\"a,}
\emph{Contributions to the distortion theory of quasiconformal mappings},
Ann. Acad. Sci. Fenn. Ser. A I \textbf{273} (1959), 1--14.

%\bibitem[N]{N} \textsc{Z. Nehari,} \emph{Conformal Mapping},
%McGraw-Hill Book Co., New York, 1952.

\bibitem [PV] {PV} \textsc{S. Ponnusamy and M. Vuorinen,}
{\em Asymptotic expansions and inequalities for hypergeometric functions},
Mathematika, \textbf{44} (1997), 278--301.

\bibitem [R] {R} \textsc{E. D. Rainville,} \emph{ Special Functions},
MacMillan, New York, 1960.


\bibitem[SV]{SV} \textsc{T. Sugawa  and M. Vuorinen,}
 {\em Some inequalities for the Poincar\'e metric of plane domains},
Math. Z. \textbf{250} (2005), 885--906.

\bibitem[Wa]{Wall:fraction} \textsc{H.~S. Wall,} \emph{Analytic {T}heory of {C}ontinued {F}ractions}, D.
Van Nostrand Company, Inc., New York, 1948.



\end{thebibliography}

%\newpage
\bigskip

\noindent
ANDERSON: \\
  Department of Mathematics \\
Michigan State University \\
     East Lansing, MI 48824, USA \\
      email: {\tt anderson@math.msu.edu}\\
     FAX: +1-517-432-1562\\ [1mm]

\noindent
SUGAWA:\\
     Department of Mathematics, Graduate School of Science \\
     Hiroshima University, 1-3-1, Kagamiyama\\
    Higashi-Hiroshima, 739-8526 JAPAN\\
     email: {\tt sugawa@math.sci.hiroshima-u.ac.jp}\\
     FAX: +81-82-424-0710\\ [1mm]

\noindent
VAMANAMURTHY:\\
   Department of Mathematics \\
    University of Auckland \\
    Auckland, NEW ZEALAND\\
     email: {\tt vamanamu@math.auckland.nz}\\
FAX: +649-373-7457\\ [2mm]

\noindent
VUORINEN:\\
     Department of Mathematics \\
     University of Turku \\
     Vesilinnantie 5\\
     FIN-20014, FINLAND\\
     e-mail: ~~{\tt vuorinen@utu.fi}\\
     FAX: +358-2-3336595\\

\end{document}